\theoremstyle{plain} % 
\newtheorem*{theorem*}{\indent\bf Theorem}
\newtheorem{theorem}{\indent\bf Theorem}%[section] 
\newtheorem{corollary}{\indent\bf Corollary}
\newtheorem{proposition}{\indent\bf Proposition}
\theoremstyle{definition} %
\newtheorem{definition}{\indent\bf Definition}
\newtheorem{remark}{\indent\bf Remark}
\newtheorem{conjecture}{\indent\bf Conjecture}
\newcommand{\ba}{\bvec a}
\newcommand{\bb}{\bvec b}
\newcommand{\be}{\bvec e}
\newcommand{\boldf}{\bvec f}
\newcommand{\bw}{\bvec w}
\newcommand{\bu}{\bvec u}
\newcommand{\bv}{\bvec v}
\newcommand{\bx}{\bvec x}
\newcommand{\by}{\bvec y}
\newcommand{\balpha}{\bvec \alpha}
\newcommand{\bbeta}{\bvec \beta}
\newcommand{\brho}{\bvec \rho}
\newcommand{\bvarphi}{\bvec \varphi}
\newcommand{\bkappa}{\bvec \kappa}
\newcommand{\tr}{\operatorname{tr}}
\newcommand{\bvec}[1]{\mbox{\boldmath $#1$}}
\newcommand{\End}{\operatorname{End}}
\begin{document}
\title[From colored Jones  to  logarithmic invariants]{{From colored Jones invariants to  logarithmic invariants}}
%\titlerunning{From colored Jones invariants to  logarithmic invariants} %
%
\author[Jun Murakami]{Jun Murakami} 
\address{Department of Mathematics, Faculty of Science and Engineering, Waseda University, 3-4-1 Ohkubo, Shinjuku-ku,
Tokyo 169-8555,  JAPAN.}
\email{murakami@waseda.jp}
%\authorrunning{Jun Murakami}
\date{} %
%
%%%%%%%%%%%%%%%%%%%%%%%%%%%%%%%%%%%%%%%%%
%
\subjclass[2010]{ %2010 MSC numbers
Primary 57M27; Secondary 17B37, 51M25.
}
%\keywords{
%Knot theory, quantum group, hyperbolic volume.  
%}
\thanks{%Thanks
This research is partially supported by the Grant-in-Aid for Scientific Research (B) (25287014),  Exploratory Research (25610022) of Japan Society for the Promotion of Science, 
and the Erwin Schr\"{o}dinger Institute for Mathematical Physics (ESI) in Vienna.  
}
\maketitle
\begin{abstract}
In this work, we give a formula for the logarithmic invariant of knots  in terms of certain derivatives of the colored Jones invariant.  
This invariant is related to the logarithmic conformal field theory, and was defined by using the centers in the radical of the restricted quantum group at root of unity.
A relation between logarithmic invariant and the hyperbolic volume of a cone manifold is also investigated.
\end{abstract}

%\vspace{-5mm}
%%%%%%%%%%%%%%% 脚注 %%%%%%%%%%%%%%%%
%%%%%%%%%%%%%%%%%%%%%%%%%%%%%%%%%%%%%%%%%
%\baselineskip18pt
\section*{Introduction}
%\vspace{3mm}
The logarithmic invariants of knots are introduced by Nagatomo and the author  \cite{MN} by using the centers in the Jacobson radical of the restricted quantum group $\overline{\mathcal U}_q(sl_2)$ at root of unity.   
%They also give a formula to obtain the logarithmic invariant from the colored Alexander invariant, which is first introduced by Akutsu-Deguchi-Ohtsuki \cite{ADO} and is reconstructed in \cite{M} by using the quantum group ${\mathcal U}_q(sl_2)$ at root of unity.  
In this paper, we give a formula for the logarithmic invariant in terms of the colored Jones invariant.
%instead of the colored Alexander invariant.  
Let $N$ be a positive integer greater than $1$ and let $\xi$ be the $2N$-th root of unity given by $\xi=\exp(\pi\sqrt{-1}/N)$.  
The center of $\overline{\mathcal U}_{\xi}(sl_2)$ is $3N-1$ dimensional, and 
its good basis \begin{equation}
\{\hat\brho_1, \hat\brho_2, \cdots, \hat\brho_{N-1}, \hat\bvarphi_1, \hat\bvarphi_2, \cdots, \hat\bvarphi_{N-1}, 
\hat\bkappa_0, \hat\bkappa_1, \cdots, \hat\bkappa_N
\}
\label{eq:firstbasis}
\end{equation}
is given by \cite[Sect.  5.2]{F}
which behaves well under certain action of $SL(2, {\mathbb Z})$.        
For a knot $L$, 
let $\gamma_s^{(N)}(L)$ be the logarithmic invariant corresponding to  $\hat\bkappa_s$ of the  above basis,   
%et $T_L$ be a tangle obtained from $L$, and 
%$z(T_L)$ be the element of the center of the restricted quantum group  $\overline{\mathcal U}_{\xi}q(sl_2)$ which is constructed as the universal invariant of $T_L$ given by Lawrence \cite{L} and Ohtsuki \cite{O}. 
%On the other hand, let
and let $V_m(L)$ be the  colored Jones invariant corresponding to the 
$m$ dimensional representation of  
${\mathcal U}_q(sl_2)$ at generic  $q$.  
We get the following two formulas to explain the logarithmic invariant $\gamma_s^{(N)}(L)$ by using derivatives of the  colored Jones invariant
$V_m(L)$.  
\medskip
\par\noindent{\bf Theorem}
(in Theorem \ref{th:main}).
\begin{it}
The invariant $\gamma_s^{(N)}(L)$ $(1 \leq s \leq N)$ is given by
\begin{equation}
\begin{aligned}
\gamma_s^{(N)}(L) 
&= 
\frac{\xi}{2N} 
\left.\frac{d}{dq}(q - q^{-1})  \big(V_{s}(L) + V_{2N-s}(L)\big)\right|_{q=\xi}
\\&=
\frac{N}{\pi\, \sqrt{-1}}\, (\xi - \xi^{-1}) 
\left.\frac{d}{d m} {V}_m(L)\right|_{\genfrac{}{}{0pt}{}{m=s}{q = \xi}}. 
\end{aligned}
\label{eq:formula}
\end{equation}
\end{it}
\begin{remark}
The first formula in \eqref{eq:formula} is given by the derivative of $V_m(L)$ with respect to the parameter $q$.  
The second formula is given by the derivative of $V_m(L)$ with respect to the parameter $m$, which is an integer. 
However, we can differentiate $V_m(L)$ with respect to $m$ by using the following universal expression of $V_m(L)$ given by Habiro \cite[Theorem 3.1]{H} (see also \cite{Ma}).
\begin{equation}
V_m(L) 
=
\sum_{i=0}^\infty a_i(L) \, \dfrac{\{m+ i, 2\,i+1\}_q}{\{1\}_q}.  
\label{eq:universal}
\end{equation}
Here $\{n\}_q = q^n - q^{-n}$, $\{n, k\}_q = \prod_{j=0}^{m-1} \{n-j\}_q$  and
the coefficient $a_i(L)$ is a Laurent polynomial in $q$ which does not depend on $m$ (see \cite[Theorem 2.1]{H}).  
For   $\frac{d}{dm}\, V_m(L)$ in \eqref{eq:formula}, $V_m(L)$ is given by \eqref{eq:universal} and is considered to be an infinite sum with the indeterminate $m$. 
The integer $s$ is substituted to $m$ after the differentiation, and  
the sum reduces to a finite sum when  $q$ is specialized to $\xi$.     
\end{remark}
\par
The above theorem suggests some relation between the logarithmic invariant and the hyperbolic volume since relations between the colored Jones invariants and the hyperbolic volume are known for various cases by  \cite{K}, \cite{MM}, \cite{MMOTY}, \cite{G}, \cite{HM} and \cite{MY}.   
Let $L$ be a hyperbolic knot.  
In \cite{K}, Kashaev found a relation between the hyperbolic volume of the knot complement and the series of invariants $\left<L\right>_N$ he constructed.
Kashaev's  invariant turned out to be a specialization of the colored Jones invariant by \cite{MM}, more precisely,   
$\left<L\right>_N = \left.V_{N}(L)\right|_{q=\xi}$. 
Then Kashaev's conjecture is generalized as follows.  
\begin{conjecture}[Complexified volume conjecture \cite{MMOTY}]
Let $L$ be a hyperbolic knot in $S^3$. 
Then
\begin{equation}
\lim_{N\to\infty}\frac{2\, \pi \, \log \left<L\right>_N }{N}
=
\rm{Vol}\left(S^3\setminus L\right) +
\sqrt{-1} \, \rm{CS}\left(S^3 \setminus L\right), 
\label{eq:conjecture}
\end{equation}
where $\rm{Vol}\left(S^3\setminus L\right)$ and 
$\rm{CS}\left(S^3 \setminus L\right)$ are the hyperbolic volume and the Chern-Simons invariant of $S^3 \setminus L$ respectively.  
\end{conjecture}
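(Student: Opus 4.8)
The plan is to attack the conjecture by a steepest-descent analysis of an explicit state sum for $\langle L\rangle_N$, extending the strategy that is known to succeed for the figure-eight knot and a handful of other cases. The starting point is to rewrite $\langle L\rangle_N=\left.V_N(L)\right|_{q=\xi}$ as a finite multi-dimensional sum attached to an ideal triangulation $\mathcal T$ of $S^3\setminus L$ with $r$ tetrahedra. Writing the $R$-matrix of $\overline{\mathcal U}_\xi(sl_2)$ in the form adapted to the cyclic quantum dilogarithm, each tetrahedron contributes quantum factorials $(\xi)_k=\prod_{j=1}^{k}(1-\xi^{2j})$, so that after collecting terms one obtains a presentation
\[
\langle L\rangle_N \;=\; \sum_{k_1,\dots,k_r} \exp\!\Big(N\,\Phi_N\big(\tfrac{k_1}{N},\dots,\tfrac{k_r}{N}\big)\Big),
\]
where $\Phi_N$ is built from logarithms of the factorials $(\xi)_{k_i}$ together with the crossing weights of the diagram.

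The key analytic input is the asymptotic expansion of the quantum factorial. Applying the Euler--Maclaurin formula to $\log(\xi)_{\lfloor Nt\rfloor}$ gives, for $0<t<1$ and as $N\to\infty$,
\[
\log(\xi)_{\lfloor Nt\rfloor}
\;=\;
\frac{N}{2\pi\sqrt{-1}}\,\Big(\mathrm{Li}_2(1)-\mathrm{Li}_2\!\big(e^{2\pi\sqrt{-1}\,t}\big)\Big) \;+\; O(\log N),
\]
where $\mathrm{Li}_2$ is the dilogarithm. Consequently $\Phi_N$ converges to a potential $\Phi$ assembled from copies of $\mathrm{Li}_2$, one per tetrahedron. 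I would then replace the lattice sum by a contour integral and apply the method of steepest descent, so that the leading behaviour is governed by the critical points of $\Phi$.

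The geometric heart of the argument is the identification of the critical-point data with hyperbolic structures on $\mathcal T$. Setting $\partial\Phi/\partial z_i=0$ and exponentiating, the saddle-point equations become exactly Thurston's gluing equations for $\mathcal T$; the solution $z^\ast$ corresponding to the complete hyperbolic structure is the discrete faithful $\mathrm{PSL}(2,\mathbb C)$-representation. By the Neumann--Zagier formula relating the dilogarithm potential at $z^\ast$ to the complex volume, the critical value satisfies
\[
2\pi\,\Phi(z^\ast) \;=\; \mathrm{Vol}\big(S^3\setminus L\big) + \sqrt{-1}\,\mathrm{CS}\big(S^3\setminus L\big)
\]
in the appropriate normalization (the Chern--Simons term being well-defined modulo the usual $\pi^2$-ambiguity). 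Since the saddle-point method then yields $\langle L\rangle_N \sim C\,N^{a}\exp\!\big(N\,\Phi(z^\ast)\big)$, taking logarithms and multiplying by $2\pi/N$ produces precisely the right-hand side of \eqref{eq:conjecture}.

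The principal difficulty, and the reason the conjecture remains open in general, is the rigorous justification of these steps uniformly in the knot. For an arbitrary hyperbolic $L$ there is no uniform closed form for the state sum: both $\Phi_N$ and its saddles depend on the triangulation, and controlling the Euler--Maclaurin error together with the deformation of the summation lattice into an admissible steepest-descent contour has so far been carried out only case by case. Even granting the asymptotic expansion, one must prove that $z^\ast$ is the \emph{dominant} saddle, i.e.\ that no Galois-conjugate solution of the gluing equations has strictly larger real part and that the contributions of competing saddles do not cancel the geometric one. Establishing this dominance, and with it uniform estimates valid for a general $\mathcal T$, is the step I expect to be the genuine obstacle; a complete proof would likely require input beyond the saddle-point heuristic, such as resurgence-theoretic control of the asymptotic series or a quantization interpretation of the critical locus.
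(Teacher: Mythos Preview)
The statement you are asked to address is a \emph{conjecture}, not a theorem: the paper states it (with attribution to \cite{MMOTY}) purely as background and motivation, and offers no proof whatsoever. There is therefore no ``paper's own proof'' to compare your proposal against.

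Your write-up is not a proof either, and to your credit you say so explicitly in the final paragraph. What you have produced is the standard heuristic roadmap---state-sum presentation, Euler--Maclaurin replacement of quantum factorials by dilogarithms, saddle-point analysis, identification of critical equations with Thurston's gluing equations, Neumann--Zagier for the critical value---together with an honest accounting of why it falls short of a proof for general hyperbolic $L$. The gaps you name (existence of a uniform state-sum presentation adapted to an ideal triangulation, rigorous contour deformation, dominance of the geometric saddle over Galois conjugates, non-cancellation) are exactly the known obstructions, and none of them is resolved in your text. So as a proof attempt the proposal has a genuine gap at each of these steps; as a summary of the expected mechanism it is accurate, but it does not advance beyond the state of the art and should not be presented as establishing the conjecture.
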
  
%
%
%
%We call $\rm{Vol}\left(S^3\setminus L\right) +\sqrt{-1} \, \rm{CS}\left(S^3 \setminus L\right)$ the complex volume of $S^3\setminus L$.  
There are several generalizations of this conjecture.  
For example, 
if we deform $\xi$ to $\xi^\alpha = \exp(\pi \sqrt{-1}\alpha/N)$ by a complex number $\alpha$ near $1$, a conjecture for the relation between $V_{N}(L)$ at $q = \xi^\alpha$ and the complex volume of  certain deformation of the hyperbolic structure of $S^3\setminus K$ is proposed by \cite{G} and \cite{HM}.   
For the figure-eight knot, this conjecture is proved partially by Murakami-Yokota \cite{MY}.  
\par
Our invariant $\gamma_s^{(N)}(L)$ can be considered as a deformation of $\left<L\right>_N$ since $\left<L\right>_N$ is equal to $\gamma_N^{(N)}(L)$.  
Changing the parameter $N$ to $s$ can be considered as a deformation (not continuous but discrete) of the weight parameter $\lambda$ instead of the deformation of the parameter $q$.  
Comparing with the deformations in \cite{G}, \cite{HM}, \cite{MY},  we propose the following conjecture.  
\begin{conjecture}[Volume conjecture for the logarithmic invariant]
Let $L$ be a hyperbolic knot and
let $M_\alpha$ be the cone manifold along the singularity set $L$ with the cone angle $\alpha$ with $0 \leq \alpha \leq \pi$.  
Let $s_N^{\alpha}$ be a sequence of integers such that 
$\lim_{N\to\infty}{s_N^{}}/{N} = 1-{\alpha}/{2\, \pi}$.  
If $M_\alpha$ is a hyperbolic manifold, then
\begin{equation}
\lim_{N\to\infty}\frac{2\, \pi \log \gamma_{s_N^{\alpha}}^{(N)}(L)}{N} = \rm{Vol}(M_\alpha) + \sqrt{-1}\,\rm{CS}(M_\alpha).
\label{eq:logconjecture}
\end{equation}
\end{conjecture}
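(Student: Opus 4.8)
Since the assertion is stated as a conjecture, what follows is a plan of attack; its heart is the still-open parametrized (deformed) volume conjecture, and I will point out exactly where the genuine difficulty sits. The plan is to feed the proven identity \eqref{eq:formula} into the saddle-point machinery behind the volume conjecture. By the second line of \eqref{eq:formula},
\begin{equation}
\gamma_{s_N^\alpha}^{(N)}(L)=\frac{N}{\pi\sqrt{-1}}\,(\xi-\xi^{-1})\left.\frac{d}{dm}V_m(L)\right|_{\genfrac{}{}{0pt}{}{m=s_N^\alpha}{q=\xi}},
\end{equation}
whence
\begin{equation}
\frac{2\pi\log\gamma_{s_N^\alpha}^{(N)}(L)}{N}=\frac{2\pi}{N}\log\frac{N(\xi-\xi^{-1})}{\pi\sqrt{-1}}+\frac{2\pi}{N}\log\left.\frac{d}{dm}V_m(L)\right|_{\genfrac{}{}{0pt}{}{m=s_N^\alpha}{q=\xi}}.
\end{equation}
Because $\xi-\xi^{-1}=2\sqrt{-1}\sin(\pi/N)$, the prefactor obeys $N(\xi-\xi^{-1})/(\pi\sqrt{-1})=2N\sin(\pi/N)/\pi\to2$, so the first term is $O((\log N)/N)\to0$ and can be dropped. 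The whole problem thus collapses to finding the exponential growth rate of the $m$-derivative of the colored Jones invariant at $q=\xi$ along the slope $m/N\to u:=1-\alpha/(2\pi)$.

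Next I would establish a \emph{parametrized} saddle-point expansion of $V_m(L)$ at $q=\xi$ off the diagonal $m=N$. Starting from a state-sum model, or from Habiro's form \eqref{eq:universal}, one applies Poisson summation and the saddle-point method to turn the sum into an integral whose phase is a sum of quantum dilogarithms; the expected output is
\begin{equation}
\left.V_m(L)\right|_{q=\xi}\sim C_N\,\exp\!\Big(\tfrac{N}{2\pi}\,\Phi(m/N)\Big),\qquad \Phi(u)=\mathrm{Vol}(M_\alpha)+\sqrt{-1}\,\mathrm{CS}(M_\alpha),
\end{equation}
with $C_N$ of subexponential size and $\alpha=2\pi(1-u)$. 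The identification of the critical value with the complex volume of the cone manifold runs parallel to the complete case $u=1$ (that is, $\alpha=0$) treated for the figure-eight knot in \cite{MY}: the critical-point equations of the dilogarithmic potential match the Neumann--Zagier gluing equations of a geometric ideal triangulation, now deformed so that the meridian holonomy is a rotation by the angle $\alpha=2\pi(1-u)$, and the critical value reproduces $\mathrm{Vol}+\sqrt{-1}\,\mathrm{CS}$ through the Rogers dilogarithm. This is where the real work lives and where the main obstacle sits: it is precisely the deformed volume conjecture of \cite{G} and \cite{HM}, currently known only for the figure-eight knot, and only partially. A rigorous treatment demands analyticity of the dominant saddle in $u$ throughout the hyperbolic range of cone angles, a uniform error estimate, and a proof that this saddle in fact dominates the sum; for the figure-eight knot one can hope to obtain these by extending the estimates of \cite{MY} from $u=1$ to a neighborhood of a general $u$.

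Finally I would push the expansion through the $m$-derivative. If the expansion above holds with control uniform in $m$ near $s_N^\alpha$, then the chain rule gives
\begin{equation}
\left.\frac{d}{dm}V_m(L)\right|_{q=\xi}\sim \frac{\Phi'(m/N)}{2\pi}\,C_N\,\exp\!\Big(\tfrac{N}{2\pi}\,\Phi(m/N)\Big),
\end{equation}
the explicit $N$ in the exponent cancelling the $1/N$ produced by differentiating the argument $m/N$, so that only the bounded factor $\Phi'(u)/(2\pi)$ survives in front. Provided $\Phi'(u)\ne0$, this factor is subexponential, the growth rate is unchanged, and therefore $\tfrac{2\pi}{N}\log\frac{d}{dm}V_m(L)\to\Phi(u)=\mathrm{Vol}(M_\alpha)+\sqrt{-1}\,\mathrm{CS}(M_\alpha)$, which is \eqref{eq:logconjecture}. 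The non-vanishing $\Phi'(u)\ne0$ is supplied by the Schl\"{a}fli variation formula $\frac{d}{d\alpha}\mathrm{Vol}(M_\alpha)=-\tfrac12\,\ell(\alpha)$, with $\ell(\alpha)>0$ the length of the singular geodesic on the hyperbolic range $0\le\alpha<\pi$. Hence differentiating in $m$ is harmless; remarkably, one needs to control only the asymptotics of $V_m$ itself, not of its derivative. All of the difficulty is therefore concentrated in the parametrized saddle-point analysis of the second step.
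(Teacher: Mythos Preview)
Your outline is a sensible heuristic program, and you are right that the statement is open. The paper does not prove it either; the only rigorous content is Theorem~\ref{th:volume}, which establishes the real-volume part for the figure-eight knot when $0\le\alpha<\pi/3$. The paper's route to that partial result, however, is entirely different from yours: it does not pass through the derivative identity \eqref{eq:formula} or any saddle-point analysis of $V_m$. Instead it works directly with the explicit Habiro-type expression \eqref{eq:gammas} (where for $K_{4_1}$ every $a_i=1$), bounds the first sum crudely by $N^3$, locates the dominant term $a_{i_2^{(N)}}$ in the second sum via the condition $\cos(2\pi i_2^{(N)}/N)\approx\cos(2\pi s_N^\alpha/N)-\tfrac12$, approximates $\log a_i$ by Riemann sums for $\int\log|2\sin\pi t|\,dt$, and matches the limit to Mednykh's Lobachevski-function formula for $\mathrm{Vol}(M_\alpha)$. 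This buys an unconditional elementary proof in the stated range, at the cost of being completely specific to $K_{4_1}$; your approach, by contrast, would apply uniformly to all hyperbolic knots if the underlying asymptotic were available.

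One genuine wrinkle in your plan deserves flagging. The parametrized conjectures of \cite{G} and \cite{HM} that you invoke deform $q$ away from $\xi$ while keeping the color at $N$, whereas what your argument actually requires is the asymptotic of $V_m(L)$ at the \emph{fixed} root $q=\xi$ as $m/N\to u\ne1$. The paper itself stresses that these are analogous but distinct deformations (see the paragraph between the two conjectures). So even for the figure-eight you cannot simply cite \cite{MY}; you would first have to transfer a $q$-deformed asymptotic into an $m$-deformed one, which is plausible through the approximate dependence of $V_m$ on $q^m$ but is not automatic. Once such an asymptotic is in hand, your Schl\"afli argument for $\Phi'(u)\ne0$ and your observation that differentiating in $m$ leaves the exponential growth rate unchanged are both sound.
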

\par
For the figure-eight knot, we prove this conjecture for $\alpha$ satisfying $0 \leq \alpha <  {\pi}/{3}$, and check numerically for all $\alpha$.  
\par
This paper is organized as follows.  
In Sect. 2, we recall the construction of the colored Jones invariant.  
In Sect. 3, we recall the restricted quantum groups, their representations and their centers.  
These materials are explained in \cite{F}.  
In Sect. 4, we discuss about the logarithmic invariant of knots.  
For a knot $L$, there is a tangle $T_L$ corresponding to $L$, and by passing through the universal invariant by Lawrence \cite{L} and Ohtsuki \cite{O}, we get a center $z(T_L)$ of $\overline{\mathcal U}_{\xi}(sl_2)$, which is an invariant of $L$.  
We introduce a representation of ${\mathcal U}_q(sl_2)$ for generic $q$, which coincides with a projective representation of the restricted quantum group $\overline{\mathcal U}_{\xi}(sl_2)$ when $q$ is specialized to $\xi$.  
Then, by applying this specialization to the  colored Jones invariant $V_m(L)$,  we get a formula to explain the logarithmic invariant in terms of $V_m(L)$.  
Moreover, since the invariant $z(T_L)$ is a linear combination of the  basis in \eqref{eq:firstbasis}, these coefficients are again invariants of  $L$,  
and they are expressed  in terms of $V_m(L)$.   
In Sect. 5, we investigate the relation between the logarithmic invariant of the figure-eight knot $K_{4_1}$ and the hyperbolic volume of a cone manifold along  $K_{4_1}$.    
\par\noindent{\bf Acknowladgement.}
I would like to thank Gregor Masbaum for valuable discussion.   
\section{Colored Jones invariant}
\subsection{Notations}
Let $q$ be a parameter, $\xi = \exp(\pi\sqrt{-1}/N)$ be
the primitive $2N$-th root of unity, and
 we use the following notations.   
$$
\begin{aligned}
\{n\}_q &= q^n - q^{-n}, \quad
\{n, m\}_q = \prod_{k=0}^{m-1} \{n-k\}_q, \quad
\{n\}_q! = \{n, n\}_q, 
\\
\{n\} &= \{n\}_{\xi}, \quad
\{n\}! = \{n\}_{\xi}! , \quad
[n] =[n]_{\xi}, \quad
[n]! = [n]_{\xi}!, \quad
\{n\}_+ = \xi + \xi^{-1},
\\
[n]_q &= \frac{\{n\}_q}{\{1\}_q}, \quad
[n]_q! = \prod_{k=1}^n[k]_q, \quad
\left[\begin{matrix} n \\ k\end{matrix}\right]_q
= \frac{[n]_q!}{[k]_q! \,[n-k]_q!},
\quad
\left[\begin{matrix} n \\ k\end{matrix}\right]
=
\left[\begin{matrix} n \\ k\end{matrix}\right]_{\xi}.  
\end{aligned}
$$
\subsection{Quantum group ${\mathcal U}_{q}(sl_2)$}
Let ${\mathcal U}_q(sl_2)$ be the quantum group defined by
$$
\begin{aligned}
{\mathcal U}_q(sl_2)
=
\left<K,\ E,\ F  \right.\ \mid&\ \
K\, E\, K^{-1} = q^{2} \, E, \ \ 
K\, F\, K^{-1} = q^{-2} \, F,
\\
&\ \left.
E\, F - F\, E = 
\frac{K - K^{-1}}{q-q^{-1}}\right>.  
\end{aligned}
$$
The Hopf algebra structure of ${\mathcal U}_q(sl_2)$ is given by
$$
\begin{aligned}
\Delta(K) &= K \otimes K, \ \ 
\Delta(E) = 1 \otimes E + E \otimes K, \ \ 
\Delta(F) = K^{-1} \otimes F + F \otimes 1, 
\\
\epsilon(K) &=1, \ \ 
\epsilon(E) = \epsilon(F) = 0, 
\\
S(K) &= K^{-1}, \ \ 
S(E) = -E\, K^{-1}, \ \ 
S(F) = - K\, F, 
\end{aligned}
$$
where $\Delta$ is the coproduct, $\epsilon$ is the count and $S$ is the antipode.  
The universal $R$-matrix of ${\mathcal U}_q(sl_2)$ is given by
\begin{equation}
R = 
q^{\frac{1}{2}H\otimes H} \, 
\sum_{n=0}^\infty
\frac{\{1\}_q^{2n}}{\{n \}_q!} q^{\frac{n(n-1)}{2}}\,(E^n \otimes F^n),  
\label{eq:r-matrix}
\end{equation}
where $H$ is an element such that $q^H = K$.  
\subsection{Irreducible representations of ${\mathcal U}_q(sl_2)$}
Let $W_{m}$ be the highest weight representation of the  quantum group ${\mathcal U}_q(sl_2)$ given by the following basis and actions.  
Let $\boldf_0$, $\boldf_1$, $\cdots$, $\boldf_{m-1}$ be the weight basis of $W_{m}$ and the actions of $E$, $F$, $K$ are given by 
$$
E \, \boldf_i = [i]_q \, \boldf_{i-1}, \quad 
F \, \boldf_i = [m-1-i]_q \, \boldf_{i+1},  \quad
K \, \boldf_i = q^{m-1-2i} \, \boldf_i.
%\label{eq:basis}
$$
Then $W_m$ is irreducible  if $q$ is generic.  
Let $\rho_m : {\mathcal U}_q(sl_2) \to \End(W_m)$ be the algebra homomorphism defined by the above actions.  
\subsection{Colored Jones invariants}
Here we explain the colored Jones invariants briefly.
For detail, see \cite{RT}.  
For a knot $L$ w, let $b_L$ be a braid whose closure is  isotopic to $L$ as a framed knot.  
%Writhe zero means that the number of the positive clossings and the number of the negative clossings of $b_L$ are equal.  
Let $n$ be the number of strings of $b_L$.  
By assigning universal $R$ matrix at each clossing of a braid, a represntation of the braid group $B_n$  is defined on $W_m^{\otimes n}$, i.e. we have a homomorphism $\rho_m^{(n)} : B_n \to \End(W_m^{\otimes n})$.  
The colored Jones invariant $V_m(L)$ of $L$ is given by the quantum trace of 
$\rho_m^{(n)}(b_L)$.  
More precisely, 
$
V_m(L) = \tr\big(\rho^m(K)^{\otimes n} \, \rho_m^{(n)}(b_L)\big)
$.
Later, we use the {\it normalized colored Jones invariant} $\widetilde V_m(L)$ which is defined by
$
\widetilde V_m(L) = {V_m(L)}/{[m]}  
$.
%Note that the  value of $V_m$ for the trivial knot is 1.  
%
%
\subsection{Tangle invariant}
The normalized colored Jones invariant $\widetilde V_m(L)$ can be interpreted as an invariant of a $(1,1)$-tangle $T_L$ whose closure is the knot $L$.  
\par
Let $V$ be a $d$ dimensional representation of ${\mathcal U}_{\xi}(sl_2)$  with basis 
$\{\be_0, \be_1, \cdots, \be_{d-1}\}$ and $\rho_V : {\mathcal U}_{\xi}(sl_2) \to \End(V)$ be the corresponding algebra homomorphism. 
Let $b_L \in B_n$ as before, then there is a homomorphism $\rho_V^{(n)} : B_n \to \End(V^{\otimes n})$ denifed by the universal $R$ matrix.  
Let $T_L$ be a (1,1)-tangle obtained from $b_L$ by taking the closure of the right $(n-1)$ strings, then the closure of $T_L$ is $L$.  
On the other hand, by taking the partial trace of $\rho_V^{(n)}(b_L)$ corresponding to the right $(n-1)$ components of $V^{\otimes n}$, we get a operator in $\End(V)$.   
Here the partial trace $\widetilde\tr$ is given as follows.   
$$
\widetilde\tr\big(\rho_V^{(n)}(b_L)\big)_{i_1}^{j_1} = 
\sum_{i_2, i_3, \cdots, i_n=0}^{d-1}
\Big((id \otimes \rho_V^{}(K)^{\otimes (n-1)})\,\rho_V^{(n)}(b_L)
\Big)_{i_1, i_2, \cdots, i_n}^{j_1, i_2, \cdots, i_n}.  
$$
The operator $\widetilde\tr(\rho_V^{(n)}(b_L))$ is an isotopy invariant of the knot $L$, and 
if $V=W_m$,  $\widetilde\tr(\rho_m^{(n)}(b_L))$ is
a scalar matrix since $W_m$ is irreducible.  
This scalar is equal to $\widetilde V_m(L)$.  
\section{Restricted quantum group $\overline{\mathcal U}_{\xi}(sl_2)$}
We introduce the resetricted quantum group and its representations.  
\subsection{Restricted quantum group $\overline{\mathcal U}_{\xi}(sl_2)$}
\begin{definition}
The {\it restricted quantum group} $\overline{\mathcal U}_{\xi}(sl_2)$ is given by 
$$
\overline{\mathcal U}_{\xi}(sl_2)
=
{\mathcal U}_{\xi}(sl_2)/(E^N,\ F^N,\  K^{2N} - 1),
$$
i.e.~$\overline{\mathcal U}_{\xi}(sl_2)$ is defined from  ${\mathcal U}_{\xi}(sl_2)$ by adding new relations $E^N = F^N = 0$ and $K^{2N} = 1$.  
\end{definition}
The  $R$ matrix of \,$\overline{\mathcal U}_{\xi}(sl_2)$ is given by
\begin{equation}
R = 
(\xi^{\frac{1}{2}})^{H\otimes H} \, 
\sum_{n=0}^{N-1}
\frac{\{1\}^{2n}}{\{n \}!} \xi^{\frac{n(n-1)}{2}}\,(E^n \otimes F^n).    
\label{eq:r-matrixlog}
\end{equation}
Here $\xi^{\frac{1}{2}} = \exp\left({\pi\sqrt{-1}}/{2N}\right)$,
$H$ is given by $\xi^{\,H} = K$ and  satisfies
$$
H \, E - E \, H = 2\, E, \quad 
H \, F - F \, H = -2\, F.
$$ 
Moreover, for every $\overline{\mathcal U}_{\xi}(sl_2)$-module $V$, 
if $K\, v = v$ for $v \in V$, then we assume $H\, v = 0$.   
With the above assumptions, the representation of the $R$ matrix on the tensor representation of two projective modules explained in the next subsection is uniquely determined, and coincides with the representation of the universal $R$-matrix of $\overline{\mathcal U}_{\xi}(sl_2)$ given by Drinfeld's quantum double constriction in  \cite{F}.    
\subsection{Projective modules of $\overline{\mathcal U}_{\xi}(sl_2)$}
We first explain irreducible representations of  $\overline{\mathcal U}_\xi(sl_2)$.  
Let $U_s^{\pm}$ be the $s$-dimensional irreducible representations of $\overline{\mathcal U}_{\xi}(sl_2)$ labeled by  $1 \leq s \leq N$.  
The module  $U_s^{\pm}$ is  spanned by elements $\bu_n^{\pm}$ for $0 \leq n \leq s-1$, where the action of $\overline{\mathcal U}_{\xi}(sl_2)$  is given by
$$
\begin{aligned}
K \, \bu_n^{\pm} 
&= 
\pm \xi^{s-1-2n} \, \bu_n^{\pm}, 
&0 \leq n \leq s-1,\qquad&
\\
E \, \bu_n^{\pm} &= \pm [n][s-n] \, \bu_{n-1}^{\pm}, 
\quad  &1 \leq n \leq s-1,
\qquad&\qquad
E \, \bu_0^{\pm} = 0, 
\\
F \, \bu_n^{\pm} &= \bu_{n+1}^{\pm}, 
\quad
&0 \leq n \leq s-2,
\qquad&\qquad
F \, \bu_{s-1}^{\pm} = 0 .  
\end{aligned}
$$
Especially, $U_1^{+}$ is the trivial module for which $K$ acts by 1 and  $E$, $F$ act by 0.  
The weights (eigenvalues of $K$) occurring in $U_s^{+}$ are
$
 \xi^{s-1}$, $\xi^{s-3}$, $\cdots$, $\xi^{-s+1}$,
and the  weights occurring in $U_{N-s}^{-}$ are
$
-\xi^{N-s-1}$, $-\xi^{N-s-3}$, $\cdots$, $-\xi^{-N+s+1}$.  
\par
Let  $V_s^{\pm}$ $(1 \leq s \leq N)$ be the $N$ dimensional representation with highest-weight 
$\xi^{s-1}$ spanned by elements $\bv_n^{\pm}$ for $0 \leq n \leq N-1$, where the action of $\overline{\mathcal U}_{\xi}(sl_2)$  is given by 
$$
\begin{aligned}
K \, \bv_n^{\pm} 
&= 
\pm\xi^{s-1-2n} \, \bv_n^{\pm}, 
\quad& 0 \leq n \leq N-1, \qquad&
\\
E \, \bv_n^{\pm} &= \pm [n][s-n] \, \bv_{n-1}^{\pm}, 
\quad  &1 \leq n \leq N-1,
\qquad&\qquad
E \, \bv_0^{\pm} = 0, 
\\
F \, \bv_n^{\pm} &= \bv_{n+1}^{\pm}, 
\quad&
0 \leq n \leq N-2,
\qquad&\qquad
F \, \bv_{N-1}^{\pm} = 0 .  
\end{aligned}
$$
Note that   
$V_N^\pm = U_N^\pm$.   
For $1 \leq s \leq N-1$,  $V_s^{\pm}$ satisfies the exact sequence
$$
0 \longrightarrow  U_{N-s}^{\mp} \longrightarrow
V_s^{\pm} \longrightarrow U_s^{\pm} \longrightarrow 0,   
$$
and there are projective modules 
$P_s^{\pm}$ satisfying the following exact sequence.  
$$
0 \longrightarrow  V_{N-s}^{\mp} \longrightarrow
P_s^{\pm} \longrightarrow V_s^{\pm} \longrightarrow 0.
$$
%Indecomposable modules $P_s^{\pm}$ ($1 \leq s \leq N-1$) of  \,${\mathcal U}_{\xi}(sl_2)$ are given in \cite{JiMiTa}, 
%which are based on the construction in \cite{ReTu}.    
The module ${\mathcal P}_s^{+}$ has a basis
$
\{\bx_j^{+},\ \by_j^{+}\}_{0 \leq j \leq N-s-1}
\cup
\{\ba_n^{+}, \ \bb_n^{+}\}_{0 \leq n \leq s-1},   
$
and the actions of $\overline{\mathcal U}_{\xi}(sl_2)$ is determined by
$$
\begin{aligned}
&K\, \bx_j^{+} = \xi^{2N-s-1-2j} \, \bx_j^{+}, 
%\qquad
\ \ \quad
K \, \by_j^{+}=\xi^{-s-1-2j}\, \by_j^{+},
\quad
0 \leq j \leq N-s-1,
\\
&K \, \ba_n^{+} = \xi^{s-1-2n} \, \ba_n^{+},
\qquad\ \quad
K \, \bb_n^{+} = \xi^{s-1-2n} \, \bb_n^{+}, 
\quad\ \ \,
0 \leq n \leq s-1,
\end{aligned}
$$
\begin{equation}
\begin{aligned}
%\\
&\begin{matrix}
E \, \bx_j^{+} =
%\begin{cases}
-[j][N-s-j] \, \bx_{j-1}^{+},\\
\qquad\qquad
 0 \leq j \leq N-s-1,
 \end{matrix}
%\\
%0 & k = 0, 
%\end{cases}
\quad
E\, \by_j^{+} = 
\begin{cases}
-[j][N-s-j] \, \by_{j-1}^{+}, \\
\qquad\qquad1 \leq k \leq N-s-1,
\\
\ba_{s-1}^{+}, \qquad j=0,
\end{cases}
\\
&\!\!\begin{matrix}E \, \ba_n^{+} = 
[n][s-n]  \ba_{n-1}^{+},  \\
\qquad\qquad 0 \leq n \leq s-1,
\end{matrix}
\ \
E \, \bb_n^{+} =
\begin{cases}
[n][s-n] \bb_{n-1}^{+} + \ba_{n-1}^{+}, & \!\!1 \leq n \leq s-1,
\\
\bx_{N-s-1}^{+}, & n=0,
\end{cases}
%\end{aligned}
%$$
%$$
%\begin{aligned}
\\
&F\, \bx_j^{+} = 
\begin{cases}
\bx_{j+1}^{+}, & 0 \leq j \leq N-s-2,
\\
\ba_0^{+}, & j = N - s - 1,
\end{cases}
\ \ 
F\, \by_j^{+} =
\by_{j+1}^{+}, \  0 \leq j \leq N-s-2,
\\
%\quad
&
F \, \ba_n^{+} = \ba_{n+1}^{+},
\quad 0 \leq n \leq s-1,
\qquad\quad
%\qquad\qquad\quad%\qquad\qquad
%\\
F\, \bb_n^{+} =
\begin{cases}
\bb_{n+1}^{+}, & 0 \leq n \leq s-2,
\\
\by_0^{+}, & n = s-1.  
\end{cases}
\end{aligned}
\label{eq:pp}
\end{equation}
Here we assume that 
$\bx_{-1}^+=\ba_{-1}^+=\by_{N-1}^+ = \ba_s^+=0$. 
\par  
The  module ${\mathcal P}_{N-s}^{-}$  has a basis
$
\{\bx_j^{-},\ \by_j^{-}\}_{0 \leq j \leq N-s-1}
\cup
\{\ba_n^{-}, \ \bb_n^{-}\}_{0 \leq n \leq s-1},   
$ 
and the action of $\overline{\mathcal U}_{\xi}(sl_2)$ is determined by
\begin{equation}
\begin{aligned}
&K \, \bx_j^{-} =  \xi^{-s-1-2j} \, \bx_j^{-}, 
\qquad
K \, \by_j^{-} =  \xi^{-s-1-2j}\, \by_j^{-},
\quad
0 \leq j \leq N-s-1,
\\
&K \, \ba_n^{-} = \xi^{s-1-2n} \, \ba_n^{-},
\qquad\ 
K \, \bb_n^{-} = \xi^{-2N+s-1-2n} \, \bb_n^{-}, 
\quad
0 \leq n \leq s-1,
\\
&\begin{matrix}
E \, \bx_j^{-} =
-[j][N\!-\!s\!-\!j] \, \bx_{j-1}^{-}, \\
\qquad\qquad 0 \leq k \leq N-s-1,
\end{matrix}
\quad
E\, \by_j^{-} = 
\begin{cases}
-[j][N\!-\!s\!-\!j] \, \by_{j-1}^{-} + \bx_{j-1}^{-}, \\
\qquad\qquad {1 \leq j \leq N\!-\!s\!-\!1,}
\\
\ba_{s-1}^{-}, \qquad j=0,
\end{cases}
\\
&\begin{matrix}
E \, \ba_n^{-} = 
[n][s-n] \, \ba_{n-1}^{-}, \qquad\\ 
\qquad\qquad\quad 0 \leq n \leq s-1,
\end{matrix}
\ \ \ 
E \, \bb_n^{-} =
\begin{cases}
[n][s-n]\, \bb_{n-1}^{-}, & 1 \leq n \leq s-1,
\\
\bx_{N-s-1}^{-}, & n=0,
\end{cases}
%\end{aligned}
%$$
%$$
%\begin{aligned}
\\
%\end{aligned}
%$$
%$$
%\begin{aligned}
%\\
&\begin{matrix}
F\, \bx_j^{-} =
\bx_{j+1}^{-}, \qquad \qquad\qquad\quad
\\
0 \leq j \leq N-s-2,
\end{matrix}
%\qquad
F\, \by_j^{-} = 
\begin{cases}
\by_{j+1}^{-}, & 0 \leq j \leq N-s-2,
\\
\bb_0^-, & j = N - s - 1,
\end{cases}
\\
&F\, \ba_n^{-} =
\begin{cases}
\ba_{n+1}^{-}, & 0 \leq n \leq s-2,
\\
\bx_0^{-}, & n = s-1.  
\end{cases}
\qquad
F \, \bb_n^{-} = \bb_{n+1}^{-},
\qquad 0 \leq n \leq s-1.
\end{aligned}
\label{eq:pm}
\end{equation}
Here we assume that $\bx_{-1}^- = \ba_{-1}^- = \bx_{N-s}^- = \bb_s^- = 0$.  
\subsection{Centers of $\overline{\mathcal U}_{\xi}(sl_2)$}
The center of $\overline{\mathcal U}_{\xi}(sl_2)$ is investigated in \cite{F}.  
\smallskip
\par\noindent
\begin{proposition}[\cite{F}, 4.4.4.]
The dimension of the center $Z$ of  $\overline{\mathcal U}_{\xi}(sl_2)$ is $3N-1$.  
Its commutative algebra structure is described as follows.  
There are two special central idempotents $\be_0$ and $\be_N$, other central idempotents $\be_s$ $(1 \leq s \leq N-1)$, centers in the radical $\bw_s^{\pm}$ $(1 \leq s \leq N-1)$, and they satisfy the following commutation relation.  
$$
\begin{aligned}
\be_s \, \be_t= \delta_{s, t}\, \be_s,
\qquad\qquad\quad\ \,
& 0 \leq s, t \leq N,
%\vspace{2mm}
\\
\be_s \, \bw_t^{\pm}= \delta_{s,t} \, \bw_t^{\pm},\qquad
\qquad\ 
& 0 \leq s \leq N, \ 1 \leq t \leq N-1,
%\vspace{2mm}
\\
\bw_s^{\pm} \, \bw_t^{\pm} = \bw_s^{\pm} \, \bw_t^{\mp} = 0,
\qquad
& 1 \leq s,\ t \leq N-1.  
\end{aligned}
$$
\end{proposition}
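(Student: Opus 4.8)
The plan is to obtain both the dimension $3N-1$ and the multiplication table from the block decomposition of the finite-dimensional algebra $\overline{\mathcal U}_{\xi}(sl_2)$, in the spirit of the representation-theoretic analysis of \cite{F}. First I would recall that the $2N$ simple modules are precisely the $U_s^{\pm}$ with $1\le s\le N$, and that $U_N^{\pm}=V_N^{\pm}$ are projective and injective (the Steinberg modules). The Casimir element $C=FE+(\xi K+\xi^{-1}K^{-1})/(\xi-\xi^{-1})^{2}$ and $K^{N}$ are central; their scalars on simples are constant on blocks, and evaluating them shows that $U_s^{+}$ and $U_{N-s}^{-}$ share a central character for $1\le s\le N-1$ (and are genuinely linked, the link being witnessed by the non-split extensions $V_s^{\pm}$, $V_{N-s}^{\mp}$), while distinct values of $s$ and the two Steinberg modules get pairwise distinct characters. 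This produces a decomposition $\overline{\mathcal U}_{\xi}(sl_2)=\mathcal B_0\oplus\mathcal B_1\oplus\cdots\oplus\mathcal B_N$ into $N+1$ indecomposable two-sided ideals, with primitive central idempotents $\be_0,\be_1,\dots,\be_N$; two of the blocks, say $\mathcal B_0$ and $\mathcal B_N$, are the Steinberg blocks and $\mathcal B_s$ $(1\le s\le N-1)$ the mixed blocks. Since $Z=\bigoplus_{s}Z(\mathcal B_s)$ and the relation $\be_s\be_t=\delta_{s,t}\be_s$ is automatic, it remains to compute each $Z(\mathcal B_s)$.

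The Steinberg blocks are easy: $\mathcal B_0,\mathcal B_N\cong\End(U_N^{\pm})\cong\mathrm{Mat}_N(\mathbb C)$, whose center is one-dimensional, spanned by $\be_0$ respectively $\be_N$. For a mixed block $\mathcal B_s$, centers being Morita invariant, $Z(\mathcal B_s)\cong Z(E_s)$ where $E_s=\End_{\overline{\mathcal U}_{\xi}}(P_s^{+}\oplus P_{N-s}^{-})$ is the basic algebra of $\mathcal B_s$. From the exact sequences for $V_s^{\pm}$ and $P_s^{\pm}$ one reads off that each of $P_s^{+}$ and $P_{N-s}^{-}$ has Loewy length three, with simple top equal to simple socle and a two-fold middle layer; hence $[P_s^{+}:U_s^{+}]=[P_s^{+}:U_{N-s}^{-}]=2$ and symmetrically for $P_{N-s}^{-}$, so each of the four Hom-spaces among $\{P_s^{+},P_{N-s}^{-}\}$ is two-dimensional and $\dim E_s=8$. (As a consistency check this gives $\dim\mathcal B_s=2N^{2}$, hence $\dim\overline{\mathcal U}_{\xi}(sl_2)=2N^{2}+(N-1)\,2N^{2}=2N^{3}$.)

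Finally I would compute $Z(E_s)$ directly. Let $\be^{+},\be^{-}$ be the two block identities (with $\be^{+}+\be^{-}=\be_s$), let $x^{\pm}\colon P\twoheadrightarrow\mathrm{top}\,P\hookrightarrow\mathrm{soc}\,P\subset P$ be the nilpotent generators of $\End(P_s^{+})$ and $\End(P_{N-s}^{-})$, and fix bases of $\mathrm{Hom}(P_s^{+},P_{N-s}^{-})$ and $\mathrm{Hom}(P_{N-s}^{-},P_s^{+})$. A central element commutes with $\be^{\pm}$, so it is a diagonal pair $z=(a\be^{+}+bx^{+},\,c\be^{-}+dx^{-})$. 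Because $\mathrm{top}\,P_s^{+}=U_s^{+}\neq U_{N-s}^{-}=\mathrm{soc}\,P_{N-s}^{-}$ and vice versa, every crossing morphism has image inside the radical and is annihilated by $x^{\pm}$ on the side where it matters; imposing commutativity of $z$ with the crossing morphisms then collapses to the single equation $a=c$. Thus $Z(E_s)$ is three-dimensional, spanned by $\be_s=\be^{+}+\be^{-}$ and the two nilpotents $\bw_s^{+}=(x^{+},0)$ and $\bw_s^{-}=(0,x^{-})$, and since $(x^{\pm})^{2}=0$ (top-to-socle composed with itself passes through the radical) one has $\bw_s^{\pm}\bw_s^{\pm}=\bw_s^{\pm}\bw_s^{\mp}=0$ and $\be_s\bw_s^{\pm}=\bw_s^{\pm}$; the remaining relations $\be_s\bw_t^{\pm}=\delta_{s,t}\bw_t^{\pm}$ and $\bw_s^{\pm}\bw_t^{\pm}=\bw_s^{\pm}\bw_t^{\mp}=0$ for $s\neq t$ follow from block orthogonality. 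Summing over blocks, $\dim Z=2\cdot1+(N-1)\cdot3=3N-1$. The main obstacle is making this last step rigorous: one must first pin down the full submodule lattice of $P_s^{\pm}$ — equivalently verify the claimed Loewy layers and that the Hom-dimensions between the projectives equal the stated composition multiplicities — and only then can the morphism bookkeeping that yields the single relation $a=c$ be trusted.
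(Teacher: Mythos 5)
The paper does not actually prove this proposition; it is quoted from \cite{F}, Prop.~4.4.4, so there is no in-paper argument to compare against. Your block-theoretic reconstruction is correct and essentially self-contained, and it is a genuinely different (and more elementary) route than the one in \cite{F}, where the center is obtained from the decomposition of the regular bimodule together with the Drinfeld and Radford maps. The skeleton of your argument checks out: the Casimir $C=FE+(\xi K+\xi^{-1}K^{-1})/(\xi-\xi^{-1})^{2}$ takes the value $2\cos(\pi s/N)$ (up to the common factor $(\xi-\xi^{-1})^{-2}$) on both $U_s^{+}$ and $U_{N-s}^{-}$ for $1\le s\le N-1$, and the values $-2$ and $+2$ on $U_N^{+}$ and $U_N^{-}$, so it alone separates all $N+1$ blocks and you do not even need $K^{N}$; the linkage within a mixed block is witnessed by $V_s^{\pm}$; the dimension count $\dim E_s=8$ and $\dim Z(E_s)=3$ is right; and in the commutator computation both compositions $x^{-}u$ and $u\,x^{+}$ vanish (the first because $\mathrm{im}\,u\subseteq\mathrm{rad}\,P_{N-s}^{-}=\ker x^{-}$, the second because $u$ sends $\mathrm{soc}\,P_s^{+}\cong U_s^{+}$ into $\mathrm{soc}\,P_{N-s}^{-}\cong U_{N-s}^{-}$ and $\operatorname{Hom}(U_s^{+},U_{N-s}^{-})=0$), so the only surviving condition is $a=c$, which is a genuine condition because the crossing Hom-spaces are nonzero. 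The one step you flag as outstanding --- the Loewy structure of $P_s^{\pm}$ (simple top $U_s^{+}$, simple socle $U_s^{+}$, semisimple middle layer $U_{N-s}^{-}\oplus U_{N-s}^{-}$) --- is not a gap in context: it can be read off directly from the explicit bases and actions \eqref{eq:pp} and \eqref{eq:pm}, e.g.\ the socle of $P_s^{+}$ is the span of the $\ba_n^{+}$ because the only other candidate highest-weight vector $\bx_0^{+}$ generates the $N$-dimensional submodule $V_{N-s}^{-}$ rather than a simple one. With that verification inserted, your argument is a complete proof of the statement cited from \cite{F}.
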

%\smallskip
\par
The center $\be_N$ acts as identity on $U_N^{+}$  and  as 0 on the other modules.  
$\be_0$ acts as identity on $U_N^{-}$  and  as 0 on the other modules.  
$\be_s$ acts as identity on $P_s^{+}$, $P_{N- s}^{-}$  and  as 0 on the other modules.  
The center $\bw_s^{+}$ acts as ${\mathcal P}_s^{+}$ by
$\bw_s^{+} \, \bb_n^{+}  = \ba_n^{+}$, 
$\bw_s^+ \, \ba_n^{+} = \bw_s^{+} \, \bx_k^{+} = \bw_s^{+} \, \by_k^{+} = 0$, and
acts on the other modules as 0.  
Similarly, $\bw_s^{-}$ acts on ${\mathcal P}_{N-s}^{-}$ by
$\bw_s^{-} \, \by_k^{-}  = \bx_k^{-}$, 
$\bw_s^{-} \, \bx_k^{-} = \bw_s^{-} \, \ba_n^{-} = \bw_s^{-} \, \bb_n^{-} = 0$, and
acts on the other modules as 0.  
\par
According to \cite{F}, the basis \eqref{eq:firstbasis} is expressed by $\be_s$ and $\bw_s^\pm$ as follows.   
$$
\begin{aligned}
%\\
%\end{aligned}
%$$
%$$
%\begin{aligned}
\hat\brho_s 
&= 
(-1)^{N+s} \, \dfrac{1}{N\,(q^{s} - q^{-s})} \, 
\left(\be_s - \dfrac{q^{s} + q^{-s}}{[s]^2} \, (\bw_s^+ +\bw_s^-)
\right)
\quad{(1 \leq s \leq N-1)},
\\
 \hat\bvarphi_s
&=
\dfrac{1}{[s]^2} \, \left(
\dfrac{N-s}{N} \, \bw_s^+ - \dfrac{s}{N} \, \bw_s^-
\right)
\qquad\qquad\qquad\qquad\qquad\quad\ {(1 \leq s \leq N-1)}.
\\
%\end{aligned}
%$$
%$$
%\begin{aligned}
\hat\bkappa_0
&=
\be_0,
\qquad
\hat\bkappa_s
=
\dfrac{1}{[s]^2} \, \left(\bw_s^+ + \bw_s^-\right)
\quad
{(1 \leq s \leq N-1)},
\qquad
\hat\bkappa_N
=
-\be_N.
\end{aligned}
$$
Let $z$ be a center of $\overline {\mathcal U}_q(sl_2)$ given by
\begin{equation}
z = 
a_0 \,  \be_0 + a_N  \, \be_N + 
%\\
\sum_{s=1}^{N-1}
\left(
a_s  \, \be_s + b_s^+  \, \bw_s^+ + b_s^-  \, \bw_s^-
\right). 
\label{eq:ab} 
\end{equation}
%Then the coefficients $a_s$, $b_s^{\pm}$ are invariants of $L$ and we denote them by $a_s(L)$, $b_s^\pm(L)$.    
Then $z$ can be also expressed by the good basis $\hat \bkappa_s$, $\hat\brho$, $\hat\bvarphi$ by  
\begin{equation}
z = 
\sum_{s=1}^{N-1}
\alpha_s^{(N)} \, \hat \brho_s 
+
\sum_{s=1}^{N-1}
\beta_s^{(N)} \, \hat \bvarphi_s 
+
\sum_{s=0}^N
\gamma_s^{(N)} \, \hat\bkappa_s, 
\label{eq:gamma}
\end{equation}
where
\begin{equation}
\begin{aligned}
\alpha_s^{(N)} &= (-1)^{N+s} \, (q^s - q^{-s}) \, N \, a_s, 
%&(1 \leq s \leq N-1)
%\\
\quad
\beta_s^{(N)} =  [s]^2 \, (b_s^+ - b_s^-),
%\quad(1 \leq s \leq N-1)
\\
%\gamma_0 &= (-1)^{N+1} \, a_0, 
%\\
\gamma_s^{(N)} &= [s]^2 \,\left(
\dfrac{s}{N}b_s^+ + \dfrac{N-s}{N} \, b_s^-\right) +
(q^s + q^{-s}) \, 
a_s, 
\qquad(1 \leq s \leq N-1)
\\
\gamma_0^{(N)} &=a_0, 
\qquad\qquad\qquad\ 
\gamma_N^{(N)} = -a_N.  
\end{aligned}
\label{eq:goodbasis}
\end{equation}
\section{Logarithmic invariants of knots}
\subsection{Logarithmic invariants}
Let $L$ be a knot with framing $0$, $T_L$ be a tangle obtained from $L$ and $z(T_L)$ be the center corresponding to the universal invariant constructed by Lawrence and Ohtsuki, where we assign the $R$ matrix given by \eqref{eq:r-matrix} and $K^{\pm1}$ to the maximal and the minimal points as in Figure \ref{figure:diagram}.    
In \cite{MN}, $K^{N\pm1}$ is assigned instead of $K^{\pm1}$, and so the  invariant defined here and that in \cite{MN} is different by the sign $(-1)^{(m-1) f}$ where $f$ is the framing of the knot.  
So these invariants coincide for an unframed knot.  
\par
From \eqref{eq:ab} and \eqref{eq:gamma}, we define $a_k(L)$, $b_k^\pm(L)$, $\alpha_k^{(N)}(L)$, $\beta_k^{(N)}(L)$, $\gamma_k^{(N)}(L)$  as follows.  
$$
\begin{aligned}
z(T_L) & = 
a_0(L) \,  \be_0 + a_N(L)  \, \be_N + 
%\\
\sum_{s=1}^{N-1}
\Big(
a_s(L)  \, \be_s + b_s^+(L)  \, \bw_s^+ + b_s^-(L)  \, \bw_s^-
\Big) 
\\
&= 
\sum_{s=1}^{N-1}
\alpha_s^{(N)}(L) \, \hat \brho_s 
+
\sum_{s=1}^{N-1}
\beta_s^{(N)}(L) \, \hat \bvarphi_s 
+
\sum_{s=0}^N
\gamma_s^{(N)}(L) \, \hat\bkappa_s.
\end{aligned}
$$
The purpose of this section is to express the above coefficients in terms of the colored Jones inariant.  
We first consider $b_s^\pm(L)$.    
\begin{proposition}
\label{prop:b}
Let $L$ be a knot.  
Then we have
\begin{equation}
\begin{aligned}
b_{s}^+(L)
&=
\dfrac{\xi }{2\,N\, [s]}\, 
\left.
\frac{d}{dq} \, \{1\}_q\left(\dfrac{V_{s}(L)}{[s]_q} - 
\dfrac{V_{2N-s}(L)}{[2N-s]_q} \right)\right|_{q=\xi},
\\
b_{s}^-(L) 
&=
\dfrac{\xi}{4\,N\, [s]}\, 
\left.
\frac{d}{dq} \, \{1\}_q\left(\dfrac{V_{2N+s}(L)}{[2N+s]_q} - 
\dfrac{V_{2N-s}(L)}{[2N-s]_q} \right)\right|_{q=\xi}.
\end{aligned}
\label{eq:differential}
\end{equation}
\end{proposition}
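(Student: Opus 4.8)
The plan is to extract $b_s^\pm(L)$ from the center $z(T_L)$ by evaluating the universal invariant on suitable modules, using the fact that $b_s^\pm(L)$ is exactly the coefficient governing how $z(T_L)$ maps $\bb_n^\pm$ to $\ba_n^\pm$ on the projective module ${\mathcal P}_s^+$ (resp. how it maps $\by_k^-$ to $\bx_k^-$ on ${\mathcal P}_{N-s}^-$). So the first step is to compute $\widetilde\tr\big(\rho_V^{(n)}(b_L)\big)$ for $V$ ranging over the relevant modules and to read off the nilpotent part. The key input is the representation-theoretic dictionary set up in Section 3: the exact sequence $0 \to U_{N-s}^{\mp} \to V_s^{\pm} \to U_s^{\pm} \to 0$, and the way the $N$-dimensional modules $V_s^{\pm}$, $V_{2N\pm s}^{\pm}$ (which as $\overline{\mathcal U}_\xi$-modules are reductions of the generic $W_m$ at $m = s$, $2N-s$, $2N+s$) sit inside the $2N$-dimensional ${\mathcal P}$'s. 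Since $z(T_L)$ acts as a scalar on each irreducible constituent, and that scalar is precisely $\widetilde V_m(L)|_{q=\xi}$ on the $W_m$-reduction, the nilpotent off-diagonal entry must be computed by a first-order deformation argument.

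The heart of the argument is therefore a \emph{derivative/deformation} computation. One works with $W_m$ for generic $q$, where $\widetilde\tr(\rho_m^{(n)}(b_L)) = \widetilde V_m(L) = V_m(L)/[m]_q$ is a genuine scalar, and then specializes $q \to \xi$ with $m$ tied to $s$ (and to $2N-s$, $2N+s$). At $q = \xi$ the module $W_s$ is no longer irreducible; the scalars attached to the two sub/quotient pieces coincide to zeroth order but split at first order in $(q-\xi)$. Writing the partial trace in the weight basis and tracking the entry that connects the $U_s^{\pm}$-part to the $U_{N-s}^{\mp}$-part, one sees that this entry is the $q$-derivative of $\{1\}_q \, V_m(L)/[m]_q = \{1\}_q\,\widetilde V_m(L)$ evaluated at $q=\xi$, with the combination $V_s/[s]_q - V_{2N-s}/[2N-s]_q$ appearing because the two ends of the Jordan block correspond to $m=s$ and the "reflected" value $m = 2N-s$ (and similarly $2N+s$ versus $2N-s$ for $\bw_s^-$, explaining the extra factor $\tfrac12$ from the two available reflections). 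The normalizing constants $\xi/(2N[s])$ and $\xi/(4N[s])$ come from the explicit change of basis between $\{\be_s,\bw_s^\pm\}$ and the good basis, i.e. from formulas \eqref{eq:goodbasis} inverted, together with the $R$-matrix prefactor $\xi^{\frac12 H\otimes H}$ and the assignment of $K^{\pm1}$ at extrema.

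I expect the main obstacle to be the careful bookkeeping of the Jordan-block structure of $z(T_L)$ on ${\mathcal P}_s^{\pm}$ and identifying, without sign or normalization error, exactly which matrix entry of $\widetilde\tr(\rho_V^{(n)}(b_L))$ equals $b_s^\pm(L)$ and how it relates to the $q$-derivative of the generic colored Jones function. Concretely, the delicate points are: (i) verifying that the off-diagonal entry of the specialized operator is the \emph{derivative} in $q$ rather than a finite difference — this requires that the two diagonal scalars agree at $q=\xi$, which in turn uses $V_{2N-s}(L)/[2N-s]_q \to V_s(L)/[s]_q$ as $q \to \xi$ (a symmetry of the colored Jones invariant under $m \mapsto 2N-m$ at the root of unity); (ii) getting the relative factor $2$ between $b_s^+$ and $b_s^-$ right, which reflects that $\bw_s^-$ sees the block between $V_{2N+s}^{-}$ and $V_{2N-s}^{-}$ while $\bw_s^+$ sees the block between $V_s^{+}$ and $V_{2N-s}^{+}$; and (iii) pinning down the overall constant $\xi/(2N)$. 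Everything else — the trace computations, the explicit actions in \eqref{eq:pp}--\eqref{eq:pm} — is routine once the correct entry is identified. Once $b_s^\pm(L)$ is in hand, the formula \eqref{eq:differential} follows by assembling these pieces and using $\widetilde V_m = V_m/[m]_q$.
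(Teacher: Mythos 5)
Your outline correctly identifies what $b_s^\pm(L)$ measures (the nilpotent part of the tangle operator on ${\mathcal P}_s^{+}$, resp.\ ${\mathcal P}_{N-s}^{-}$) and correctly anticipates that it emerges as a $q$-derivative via a l'H\^opital limit of a difference quotient in the colored Jones values. But the central mechanism is missing, and one of your stated premises is wrong. A single $W_m$ never develops a Jordan block under the partial trace: $\widetilde\tr(\rho_m^{(n)}(b_L))$ is a scalar for generic $q$ and its limit at $q=\xi$ is still a scalar; moreover $W_s$ for $1\le s\le N$ \emph{stays} irreducible at $q=\xi$ (it is $W_{2N\pm s}$ that degenerate), and none of these modules alone is the projective module. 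What is actually needed, and what the paper supplies, is: (i) explicit generic-$q$ modules ${\mathcal Y}_m^{+}\cong W_{2N-m}\oplus W_m$ and ${\mathcal Y}_m^{-}\cong W_{2N+m}\oplus W_{2N-m}$, written in a basis whose structure constants specialize at $q=\xi$ to those of ${\mathcal P}_m^{+}$, resp.\ ${\mathcal P}_{N-m}^{-}\oplus{\mathcal P}_{N-m}^{-}$; (ii) a verification (Proposition \ref{prop:R}) that the universal $R$-matrix, which has poles at $q=\xi$, nonetheless has a well-defined specialized action on these modules; and (iii) an actual computation of the off-diagonal entry. For (iii) the paper exploits centrality: $\eta_m^{+}(L)$ commutes with $\eta_m^{+}(F)$, and evaluating both orderings on $\balpha_{N-m-1}^{+}$ --- the unique basis vector where $F$ couples the two direct summands --- forces
\begin{equation*}
x_0^{+}=\left[\begin{matrix}N-1\\ m-1\end{matrix}\right]_q\,\frac{\widetilde V_{2N-m}(L)-\widetilde V_m(L)}{[N]_q},
\end{equation*}
whose numerator and denominator both vanish at $q=\xi$, giving the derivative in the limit.

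Without a substitute for (i)--(iii), your ``first-order deformation argument'' asserts the answer rather than deriving it. Two further points of bookkeeping are off: the relative factor $2$ between $b_s^{+}$ and $b_s^{-}$ is not a count of ``available reflections'' but comes from the denominators $[N]_q$ versus $[2N]_q$ together with $\frac{d}{dq}\{N\}_q\big|_{q=\xi}=-2N/\xi$ versus $\frac{d}{dq}\{2N\}_q\big|_{q=\xi}=4N/\xi$; and the overall constant $\xi/(2N[s])$ comes from the explicit isomorphisms $f,g$ (e.g.\ $f(\bb_0^{+})=\balpha_{N-m}^{+}$, $f(\ba_0^{+})=[m]\,\bbeta_0^{+}$) matching the two presentations of the projective module, not from inverting \eqref{eq:goodbasis} --- that inversion enters only later, when passing from $b_s^\pm$ to $\gamma_s^{(N)}$.
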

The proof of this propositin is given in Sect. \ref{ss:proof}.  
\begin{figure}[htb]
$$
\epsfig{file=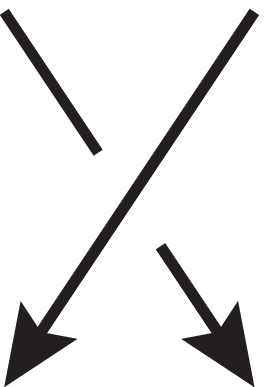, scale=0.5}\ \ 
\raisebox{0.9cm}{$\longrightarrow$}\ \ 
\epsfig{file=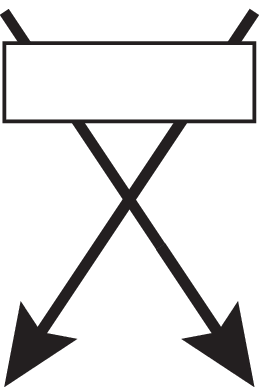, scale=0.5}
\raisebox{1.4cm}{\hspace{-9mm}$R$}
\hspace{8mm}
,
\qquad\qquad
\epsfig{file=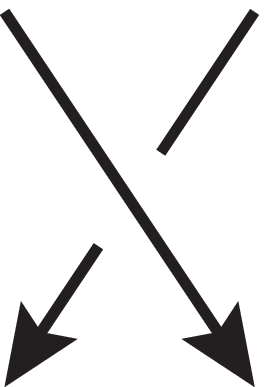, scale=0.5}\ \ 
\raisebox{0.9cm}{$\longrightarrow$}\ \ 
\epsfig{file=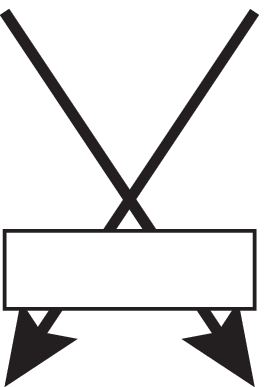, scale=0.5}
\raisebox{0.5cm}{\hspace{-9mm}$\scriptstyle R^{-1}$}
\hspace{6mm}
,
$$
\vspace{2mm}
$$
\epsfig{file=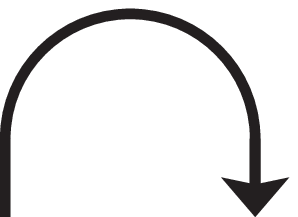, scale=0.6} \ \ 
\raisebox{0.5cm}{$\longrightarrow$}\ \ 
\epsfig{file=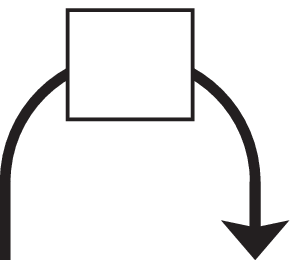, scale=0.6}
\raisebox{1.1cm}{\hspace{-13mm}$\scriptstyle K^{-1}$}
\hspace{9mm},
\qquad
\epsfig{file=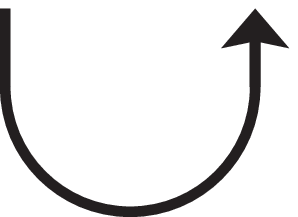, scale=0.6} \ \ 
\raisebox{0.5cm}{$\longrightarrow$}\ \ 
\epsfig{file=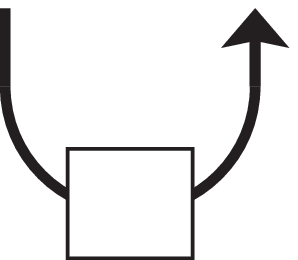, scale=0.6}
\raisebox{0.2cm}{\hspace{-12mm}$ K$}
\hspace{10mm}.
$$
%\vspace{7cm}
\caption{Assignment of the $R$ matrix and $K^{\pm1}$}
\label{figure:diagram}
\end{figure}
\subsection{Modified representations of ${\mathcal U}_q(sl_2)$}
Let $W_{m}$ be the highest weight representation of the  quantum group ${\mathcal U}_q(sl_2)$ given in Sect. 1.3.  
For an integer $m$ ($1 \leq m \leq N-1$),  we introduce a $2N$ dimensional representation ${\mathcal Y}_m^+$ which is isomorphic to  $W_{2N-m} \oplus W_{m}$.  
The basis of ${\mathcal Y}_m^+$ is $\balpha_0^+$, $\balpha_1^+$, $\cdots$, $\balpha_{2N-m-1}^+$, $\bbeta_0^+$, $\bbeta_1^+$, $\cdots$, $\bbeta_{m-1}^+$, and the actions of $E$, $F$, $K \in {\mathcal U}_q(sl_2)$ are given by
$$
\begin{aligned}
E \, \balpha_i^+ &= 
\begin{cases}
[i]_q \, \balpha_{i-1}^+ \qquad\qquad\qquad\qquad\qquad\qquad 
\text{if $i \leq N-m$ or $i \geq N+1$} ,
\\
[i]_q \, \balpha_{i-1}^+ + \left[\begin{matrix}
2N-m-i-1 \\[-5pt] N-i\end{matrix}\right]_q \, \bbeta_{m-N+i-1}^+ 
\end{cases}
\\ 
&\qquad\qquad\qquad\qquad\qquad\qquad\qquad\qquad\qquad\qquad
\text{if $N-m+1 \leq i \leq N$},
\\
%\end{aligned}
%$$
%$$
%\begin{aligned}
E\, \bbeta_i^+ &= [i]_q \, \bbeta_{i-1}^+,
\\
F \, \balpha_i^+ &=\begin{cases}
[2N-m-i-1]_q \, \balpha_{i+1}^+ & \text{if $i \neq N-m - 1$},
\\
[N]_q \, \balpha_{i+1}^++ \left[\begin{matrix}
N-1 \\[-5pt] m-1\end{matrix}\right]_q \, \bbeta_{0}^+ & 
\text{if $i = N-m -1$},
\end{cases}
\\
F\, \bbeta_i^+ &= [m-i-1]_q \, \bbeta_{i+1}^+, 
\\
K \, \balpha_i^+ &= q^{2N-m-1-2i} \, \balpha_i^+, 
\qquad\qquad\qquad\quad
%\qquad\qquad\qquad\quad\ 
K \, \bbeta_i^+ = q^{m-1-2 i} \, \bbeta_i^+.  
\end{aligned}
$$
\begin{proposition}
If $q$ is specialized to $\xi$, then ${\mathcal Y}_m^+$ is isomorphic to the projective module ${\mathcal P}_{m}^+$ given by \eqref{eq:pp}.  
\end{proposition}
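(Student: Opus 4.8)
The plan is to write down an explicit isomorphism of $\overline{\mathcal U}_{\xi}(sl_2)$-modules
$$\phi\colon {\mathcal Y}_m^+\big|_{q=\xi}\ \longrightarrow\ {\mathcal P}_{m}^+$$
by prescribing $\phi$ on the weight bases and checking that it commutes with the actions of $E$, $F$ and $K$; since these generate $\overline{\mathcal U}_{\xi}(sl_2)$ and $\phi$ will be a bijection between bases, this suffices. The first step is to verify that the ${\mathcal U}_q(sl_2)$-action on ${\mathcal Y}_m^+$ actually descends to $\overline{\mathcal U}_{\xi}(sl_2)$ at $q=\xi$, i.e. $K^{2N}=1$, $E^N=0$, $F^N=0$. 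The relation $K^{2N}=1$ is immediate from the listed $K$-eigenvalues of ${\mathcal Y}_m^+$ together with $\xi^{2N}=1$. For $F^N=0$ one uses $[N]_\xi=0$: at $q=\xi$ the coefficient of $\balpha_{N-m}^+$ in $F\,\balpha_{N-m-1}^+$ vanishes, so $F$ carries $\balpha_{N-m-1}^+$ into the $\bbeta$-chain, which has length $m\le N$, and one checks that, starting from either endpoint, the chains are annihilated after at most $N$ applications of $F$; the case of $E^N$ is symmetric. I would also record that every $q$-binomial occurring in the definition of ${\mathcal Y}_m^+$ has all entries in $\{1,\dots,N-1\}$, hence is regular and nonzero at $q=\xi$.

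Next, comparing $K$-eigenvalues shows that ${\mathcal Y}_m^+|_{q=\xi}$ and ${\mathcal P}_m^+$ have the same character, each weight space being at most two-dimensional. I would then define $\phi$ on the basis by $\balpha_i^+\mapsto c_i\,\bx_i^+$ and $\balpha_{N+i}^+\mapsto e_i\,\by_i^+$ for $0\le i\le N-m-1$, by $\bbeta_n^+\mapsto \mu_n\,\ba_n^+$ for $0\le n\le m-1$, and by $\balpha_{N-m+n}^+\mapsto q_n\,\bb_n^+ + r_n\,\ba_n^+$ for $0\le n\le m-1$, with scalars $c_i,e_i,\mu_n,q_n,r_n$ to be determined. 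The point to get right here is that $\balpha_{N-m+n}^+$ must map to a genuine combination of $\bb_n^+$ and $\ba_n^+$, not to a multiple of $\bb_n^+$ alone: the submodule $\mathrm{span}\{\bbeta_n^+\}\cong U_m^+$ of ${\mathcal Y}_m^+$ must be sent to a simple submodule of ${\mathcal P}_m^+$ isomorphic to $U_m^+$, and (using the $E$-action in \eqref{eq:pp} together with the $K$-weights) the only such submodule is $\mathrm{span}\{\ba_n^+\}$, which forces the $\bbeta_n^+$ onto the $\ba_n^+$ and hence the $\balpha_{N-m+n}^+$ onto the mixed vectors.

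Finally, imposing that $\phi$ commute with $E$ and $F$ produces, along each of the four chains and at the finitely many ``connecting'' positions --- where $F\,\balpha_{N-m-1}^+$ feeds the $\bbeta$-chain, where $F\,\balpha_{N-1}^+$ feeds $\by_0^+$, where $E\,\balpha_{N}^+=\bbeta_{m-1}^+$ at $q=\xi$, and where $E\,\balpha_{N-m}^+$ feeds $\bx_{N-m-1}^+$ --- a first-order recursive system in the scalars whose coefficients are $q$-integers that are nonzero at $q=\xi$. After fixing one normalisation (say $c_0=1$) this determines all the scalars uniquely, and then $\phi$ is a bijection between bases intertwining $E$, $F$, $K$, hence an isomorphism of $\overline{\mathcal U}_{\xi}(sl_2)$-modules; the action of any remaining element, for instance the central $\bw_m^+$, is then automatically matched, being forced by that of $E$, $F$, $K$.

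I expect the real obstacle to be the \emph{consistency} of this over-determined system: that the recursion read off from $E$ agrees with the one read off from $F$, and that the four connecting relations are compatible with the chain recursions. This reduces to a short list of identities for $q$-integers and $q$-binomials evaluated at $q=\xi$, all of which follow from $\{N-k\}_\xi=\{k\}_\xi$ (equivalently $[N-k]_\xi=[k]_\xi$) together with $\left[\begin{smallmatrix}N-1\\ k\end{smallmatrix}\right]_\xi=1$; these are precisely the relations that make the correction terms built into ${\mathcal Y}_m^+$ reassemble into the module structure \eqref{eq:pp}, and verifying them is the computational heart of the argument. One could instead try to reason abstractly --- show that ${\mathcal Y}_m^+|_{q=\xi}$ is projective with head $U_m^+$ and invoke the classification of indecomposable projectives of $\overline{\mathcal U}_{\xi}(sl_2)$ --- but establishing projectivity directly is itself delicate, so the explicit route above seems cleaner and more self-contained.
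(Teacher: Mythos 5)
This is essentially the paper's proof: it exhibits the explicit basis-to-basis intertwiner $f$ of \eqref{eq:cor1} and checks that it commutes with $K$, $E$, $F$, which is exactly the map your recursive system would output, and the identities you isolate ($[N-k]_\xi=[k]_\xi$ and $\left[\begin{smallmatrix} N-1 \\ k \end{smallmatrix}\right]_\xi=1$) are indeed what make the verification go through. One correction: the mixing you claim is forced is not --- in the paper's normalisation $f(\bb_k^+)=[k]!\,\balpha_{k+N-m}^+$ with no $\ba_k^+$ component, i.e.\ your $r_n$ may be taken to be $0$; the socle argument only forces $\bbeta_n^+$ to land on a multiple of $\ba_n^+$, while $r_n$ stays free, so after fixing $c_0=1$ the scalars are determined only up to the one-parameter ambiguity coming from $\End({\mathcal P}_m^+)$ being spanned by the identity and $\bw_m^+$.
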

%For the definitions of $\overline{\mathcal U}_{\xi}(sl_2)$ and  ${\mathcal P}_{\lambda+1}^+$, see Appendices A and B.  
\begin{proof}
We compare  the actions of $\overline{\mathcal U}_{\xi}(sl_2)$ on ${\mathcal Y}_m^+$ and ${\mathcal P}_{m}^+$.  
Let $f$ be a linear map defined by
\begin{equation}
\begin{aligned}
f(\bx_k^+) &= \frac{(-1)^{N-m-1-k}\,[m]}{[N-m-1-k]!} \,  \balpha_k^+, \quad
f(\by_k^+) = \frac{(-1)^{k}\, [N-1]!}{[N-m-1-k]!}\,  \balpha_{N+k}^+
\\
&\qquad\qquad\qquad\qquad\qquad\qquad\qquad\qquad\qquad
 \text{for $0 \leq k \leq N-m-1$},
\\
f(\ba_k^+) &=\frac{[m]!}{[m-1-k]!} \, \bbeta_k^+,
\quad
f(\bb_k^+) = {[k]!} \, \balpha_{k+N-m}^+
\quad
\text{for $0 \leq k \leq m-1$.}
\end{aligned}
\label{eq:cor1}
\end{equation}
Then a simple computation shows that the actions of $K$, $E$, $F$ on ${\mathcal Y}_m^+$ and $\mathcal{P}_{m}^+$ are commute with $f$.  
Therefore, the specialization of  $\mathcal{Y}_m^+$ at $q = \xi$ is isomorphic to $\mathcal{P}_{m}^+$ as an $\overline{\mathcal U}_{\xi}(sl_2)$ module.  
%$$
%\begin{aligned}
%E \, b_k 
%&=
% E \,  \dfrac{[k]!}{[\lambda+1]} \, \alpha_{k+N-\lambda-1} 
%\\& = 
%\dfrac{ [k]!}{[\lambda+1]} \, \left([k+N-\lambda-1] \, \alpha_{k+N-\lambda-2} + 
% \left[\begin{matrix}
%\lambda+1 \\ k
%\end{matrix}\right] \, \beta_{k-1}\right)
%\\&=
%[k] \, [\lambda+1-k] \, b_{k-1} +  a_{k-1}
%\qquad\text{for $k \geq 1$,}
%\\{}\\
%F \, x_{N-\lambda-2}
%&=
%F \, \alpha_{N-\lambda-2}
%=
%\beta_0 = a_0, 
%\\
%F\, b_{\lambda} 
%&=
%F \, [\lambda]! \, \alpha_{N-1}
%\\
%&=
%-[\lambda + N + 1] \, [\lambda]! \, \alpha_N
%\\
%&=
%\dfrac{[\lambda  + 1]! \, [N-\lambda-2]!}{[N]!} \, y_0 = y_0.  
%\end{aligned}
%$$
\qed
\end{proof}
For an integer $m$ ($1 \leq m \leq N-1$),  we introduce a $4N$ dimensional representation ${\mathcal Y}_m^-$ which is isomorphic to  $W_{2N+m} \oplus W_{2N - m}$.  
The basis of ${\mathcal Y}_m^-$ is $\balpha_0^-$, $\balpha_1^-$, $\cdots$, $\balpha_{2N+m-1}^-$, $\bbeta_0^-$, $\bbeta_1^-$, $\cdots$, $\bbeta_{2N-m-1}^-$, and the actions of $E$, $F$, $K\in {\mathcal U}_q(sl_2)$ are given by
$$
\begin{aligned}
E \, \balpha_i^- &= 
\begin{cases}
[i]_q \, \balpha_{i-1}^- & \text{if $i \leq m$ or $i \geq 2N+1$} ,
\\
[i]_q \, \balpha_{i-1}^- + \left[\begin{matrix}
2N+m-1-i \\[-5pt] 2N-i\end{matrix}\right]_q \, \bbeta_{i-m-1}^- & 
\text{if $m+1 \leq i \leq 2N$},
\end{cases}
%\end{aligned}
%$$
%$$
%\begin{aligned}
\\
E\, \bbeta_i^- &= [i]_q \, \bbeta_{i-1},
\\
F \, \balpha_i^- &=\begin{cases}
[2N+m-1-i]_q \, \balpha_{i+1}^- & \text{if $i \neq m-1$,}
\\
[2N]_q \, \balpha_{i+1}^- +  \left[\begin{matrix}
2N-1 \\[-5pt] 2N\!-\!m\!-\!1\end{matrix}\right]_q  \!\bbeta_{0}^-& 
\text{if $i = m-1$,}
\end{cases}
\end{aligned}
$$
$$
\begin{aligned}
F\, \bbeta_i^- &= [2N\!-\!m\!-\!1\!-\!i]_q \, \bbeta_{i+1}^-, 
\\
K \, \balpha_i^- &= q^{2N+m-1-2i} \, \balpha_i^-, 
\qquad\qquad\qquad
K \, \bbeta_i^- = q^{2N-m-1-2 i} \, \bbeta_i^-.  
\qquad\qquad
\end{aligned}
$$
As for ${\mathcal Y}_{m}^+$, we get the following.  
\begin{proposition}
If $q$ is specialized to $\xi$, then ${\mathcal Y}_m^-$ is isomorphic to the direct sum ${\mathcal P}_{N-m}^- \oplus {\mathcal P}_{N-m}^-$ of the projective module ${\mathcal P}_{N-m}^-$ given by \eqref{eq:pm}.  
\end{proposition}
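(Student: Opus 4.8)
The plan is to mimic, essentially verbatim, the proof of the previous proposition, which established that $\mathcal{Y}_m^+$ specializes to $\mathcal{P}_m^+$ at $q=\xi$. The strategy is to write down an explicit intertwiner and check that it commutes with the actions of $K$, $E$, $F$. The one structural difference here is that the target is a direct sum of two copies of $\mathcal{P}_{N-m}^-$ rather than a single projective module, which is forced by the dimension count: $\dim \mathcal{Y}_m^- = 4N$ while $\dim \mathcal{P}_{N-m}^- = 2N$. So the map to be constructed is $f\colon \mathcal{P}_{N-m}^- \oplus \mathcal{P}_{N-m}^- \to \mathcal{Y}_m^-$ (or its inverse).

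First I would understand how the two summands sit inside $\mathcal{Y}_m^-$ after specialization. At $q=\xi$ the generic module $W_{2N+m}$ is reducible; its composition factors include two copies of the weight pattern making up $V_{N-m}^-$-type pieces, and similarly $W_{2N-m}$ breaks up. The key observation is that the weights occurring in $\mathcal{Y}_m^-$, read off from the stated $K$-actions $K\balpha_i^- = \xi^{2N+m-1-2i}\balpha_i^-$ and $K\bbeta_i^- = \xi^{2N-m-1-2i}\bbeta_i^-$, must match (with multiplicity two) the weights of $\mathcal{P}_{N-m}^-$ as recorded in \eqref{eq:pm}. I would use this weight bookkeeping to guess the correct index shifts, splitting the range $0\le i\le 2N+m-1$ of the $\balpha^-$ into blocks of length $N$ and $N-m$, $s$, etc., exactly as the length-$N$ and length-$(N-m)$ blocks appear in $\mathcal{P}_{N-m}^-$. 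The explicit formula for $f$ will then be a collection of scalar multiples of basis vectors, the scalars being ratios of quantum factorials of the form $[N-1]!/[\cdots]!$, $[m]!/[\cdots]!$, $(-1)^{\bullet}$, chosen so that the $E$- and $F$-actions intertwine; these scalars are determined (up to the obvious freedom) by forcing the non-diagonal terms — the ones carrying $\bbeta^-$-output in the $E$-action on $\balpha^-_i$ for $m+1\le i\le 2N$, and the extra $\bx^-$-terms in $E\by_j^-$ and the $\by_0^-\mapsto\bb_0^-$, $\ba_{s-1}^-\mapsto\cdots$ links in \eqref{eq:pm} — to agree.

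After writing $f$ down, the verification is the routine "simple computation" the paper invokes: check $f(K\,v) = K\,f(v)$ (immediate from matching weights), then $f(E\,v)=E\,f(v)$ and $f(F\,v)=F\,f(v)$ on each basis vector of each $\mathcal{P}_{N-m}^-$ summand. The diagonal parts reduce to quantum-integer identities like $[2N+m-1-i]_\xi\cdot(\text{scalar}_{i+1})/(\text{scalar}_i) = 1$, and the off-diagonal parts reduce to binomial-coefficient identities among the $\xi$-quantum binomials, of precisely the same flavor as those implicitly checked for $\mathcal{Y}_m^+$. One subtlety worth flagging explicitly: some of the quantum binomial coefficients appearing, e.g. $\left[\begin{matrix}2N-1\\ 2N-m-1\end{matrix}\right]_q$, are singular (involve $[N]_q\to 0$) in the limit $q\to\xi$, so the equality of actions must be read in the specialized module where these are interpreted via L'Hôpital / the well-defined limit of the universal Habiro-type expression, just as in the $\mathcal{Y}_m^+$ case — I would either clear denominators beforehand or remark that the specialization is taken after cancellation.

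The main obstacle I anticipate is purely combinatorial rather than conceptual: getting the index-shift dictionary between the $4N$ basis vectors of $\mathcal{Y}_m^-$ and the $2\times 2N$ basis vectors of $\mathcal{P}_{N-m}^-\oplus\mathcal{P}_{N-m}^-$ exactly right, including signs, so that all the "glue" maps ($E$ hitting a $\bbeta^-$, $F$ hitting a $\bb_0^-$, etc.) line up simultaneously. Once the dictionary is correct, each verification is a one-line quantum-integer identity; but there is essentially no slack, so an error in one block propagates. I would organize the check by weight spaces, handling the top block $0\le i\le m-1$ of $\balpha^-$, the middle block $m\le i\le 2N-1$, and the tail $2N\le i\le 2N+m-1$ separately, and similarly partition the $\bbeta^-$; the decomposition into two copies of $\mathcal{P}_{N-m}^-$ should emerge as a clean splitting of these blocks into two interleaved families, after which invoking "a simple computation shows the actions commute with $f$" is legitimate, exactly as in the preceding proof.
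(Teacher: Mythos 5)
There is a genuine gap: your plan assumes the two copies of ${\mathcal P}_{N-m}^-$ sit inside ${\mathcal Y}_m^-$ as spans of two complementary ("interleaved") families of the given basis vectors, and that you can therefore write one intertwiner $f\colon {\mathcal P}_{N-m}^-\oplus{\mathcal P}_{N-m}^-\to{\mathcal Y}_m^-$ by pure index bookkeeping, exactly as for ${\mathcal Y}_m^+$. That fails. The subspace $Y_1$ spanned by $\balpha_0^-,\dots,\balpha_{N+m-1}^-,\bbeta_0^-,\dots,\bbeta_{N-m-1}^-$ is indeed a submodule (the link out of it is $F\balpha_{N+m-1}^-=[N]_q\balpha_{N+m}^-$, which vanishes at $q=\xi$), but the complementary span $Y_2$ is \emph{not}: for example
\begin{equation*}
E\,\balpha_{N+m}^- \;=\; [N+m]\,\balpha_{N+m-1}^- \;+\; \left[\begin{matrix}N-1\\ N-m\end{matrix}\right]\bbeta_{N-1}^-,
\end{equation*}
and $[N+m]=-[m]\neq 0$ at $q=\xi$, so $E$ pushes $Y_2$ back into $Y_1$. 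Hence no decomposition into disjoint basis blocks exists, and the "clean splitting" you anticipate is not there; you would stall when the off-diagonal terms refuse to close up inside the second block.

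The missing idea is to argue via a filtration plus projectivity rather than an explicit internal direct sum. The paper shows $Y_1\cong{\mathcal P}_{N-m}^-$ by an explicit map $g$, then shows the \emph{quotient} ${\mathcal Y}_m^-/Y_1\cong{\mathcal P}_{N-m}^-$ by a second explicit map $h$ (working modulo $Y_1$, so the offending terms above disappear), and finally uses that ${\mathcal P}_{N-m}^-$ is projective to split the short exact sequence $0\to Y_1\to{\mathcal Y}_m^-\to{\mathcal Y}_m^-/Y_1\to 0$, which is what produces the direct sum. Your proposal contains the right raw material (weight matching, explicit scalars, checking $E$, $F$, $K$ commute with the map), but without the quotient-then-projectivity step, or alternatively an explicit non-obvious complement with correction terms added to the $Y_2$ basis vectors, the claimed direct-sum decomposition is not established.
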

\begin{proof}
Let  $Y_1$ be the subspace of ${\mathcal Y}_{m}^-$ spanned by $\balpha_0$, $\balpha_1$, $\cdots$, $\balpha_{N+m-1}$, $\bbeta_0$, $\bbeta_1$, $\cdots$, $\bbeta_{N-m-1}$, and let $Y_2$ be the subspace spanned by the remaining basis $\balpha_{N+m}$, $\balpha_{N+m+1}$, $\cdots$, $\balpha_{2N+m-1}$, $\bbeta_{N-m}$, $\bbeta_{N-m+1}$, $\cdots$, $\bbeta_{2N-m-1}$.   
Then $Y_1$ is invariant under the action of $\overline{\mathcal U}_{\xi}(sl_2)$.  
We prove that $Y_1$ and ${\mathcal Y}_{m}^-/Y_1$ are both isomorphic to ${\mathcal P}_{N-m}^-$.  
Let $g$ be a linear map from  ${\mathcal P}_{N-m}^-$ to $Y_1$ defined by 
\begin{equation}
\begin{aligned}
g(\bx_k^-) &=\frac{(-1)^{m+k} [N\!-\!m]!}{[N\!-\!m\!-\!1\!-\!k]!} \,\bbeta_k^-, 
\qquad
g(\by_k^-) = (-1)^{k}\, [k]! \, \balpha_{m+k}^-
\\
&\hspace{6cm}   \text{for $0 \leq k \leq N\!-\!m\!-\!1$},
\\
g(\ba_k^-) &= \frac{[m]}{[m\!-\!1\!-\!k]!} \, \balpha_k^-,
\qquad
g(\bb_k^-) = \frac{(-1)^{N+m+k}\, [N\!-\!1]!}{[m-1-k]!} \, \balpha_{N+k}^-
\\
&\hspace{6cm}
 \text{for $0 \leq k \leq m-1$.}
\end{aligned}
\label{eq:cor2}
\end{equation}
Then, by checking the actions of $K$, $E$, $F$, we see that $g$ gives an isomorphism from $\mathcal{P}_{N-m}^+$ to $Y_1$ as $\overline{\mathcal U}_{\xi}(sl_2)$ modules.  
\par
Next, we define a linear map $h$ from  ${\mathcal P}_{N-m}^-$ to $Y_2$ to show that ${\mathcal P}_{N-m}^-$ are isomorphic to ${\mathcal Y}_{m}^-/Y_1$.
\begin{equation}
\begin{aligned}
h(\bx_k^-) &=\frac{[N-m]!}{ [N-m-1-k]!}\,  \bbeta_{N+k}^-,\qquad
h(\by_k^-) =[k]! \, \balpha_{N+m+k}^-
\\
&\hspace{6cm}
 \text{for $0 \leq k \leq N-m-1$},
\\
h(\ba_k^-) &=\frac{[k]!}{[m\!-\!1]!} \, \bbeta_{N-m+k}^-,
\qquad 
h(\bb_k^-) = \frac{(-1)^k \, [N-1]!}{[m\!-\!1\!-\!k]!} \, \balpha_{2N+k}^-
\\
&\hspace{6cm}
\text{for $0 \leq k \leq m-1$.}
\end{aligned}
\label{eq:cor3}
\end{equation}
Then $h$ defines an isomorphism from $\mathcal{P}_{N-m}^+$ to ${\mathcal Y}_{m}^-/Y_1$.  
This isomorphism induces an inclusion from ${\mathcal P}_{m}^-$ to ${\mathcal Y}_{m}^-$  since ${\mathcal P}_{N-m}^-$ is a projective module.
Hence ${\mathcal Y}_{m}^-$ at $q=\xi$ is isomorphic to $\mathcal{P}_{N-m}^+ \oplus \mathcal{P}_{N-m}^+$.  \qed
\end{proof}
\subsection{Specialization of the $R$ matrix at $q = \xi$}
We can not sepcialize the universal $R$ matrix \eqref{eq:r-matrix} at $q=\xi$ since 
it has a pole at $q=\xi$.  
However, we can sepcialize its action on the representation spaces we are considering.  
\begin{proposition}
\label{prop:R}
Let  $W_m$ $(m=1, 2, \dots)$ and ${\mathcal Y}_m^{\pm}$ $(1 \leq m \leq N-1)$ be the representations of ${\mathcal U}_q(sl_2)$ introduced in Sect. 2.1, and let $V_1$ and $V_2$ be two of these representations.  
If $q$ is generic, 
the universal $R$ matrix of ${\mathcal U}_q(sl_2)$ given by \eqref{eq:r-matrix} acts on $V_1 \otimes V_2$, and this action can be specialized at $q=\xi$.  
The resulting action is the same as the action of the  $R$ matrix of \,$\overline{\mathcal U}_{\xi}(sl_2)$ given by \eqref{eq:r-matrixlog}.   
\end{proposition}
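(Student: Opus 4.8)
The plan is to reduce the claim to a finite, representation-theoretic computation. The universal $R$-matrix in \eqref{eq:r-matrix} has the form $R = q^{\frac{1}{2}H\otimes H}\,\Theta$, where $\Theta = \sum_{n\geq 0} \frac{\{1\}_q^{2n}}{\{n\}_q!}\,q^{n(n-1)/2}\,(E^n\otimes F^n)$. The only source of a pole at $q=\xi$ is the denominator $\{n\}_q!$, which vanishes when $n\geq N$; on the other hand, the numerator carries the factor $\{1\}_q^{2n}$, which vanishes to order $2n$ at $q=\xi$. So the first thing I would do is observe that termwise, $\frac{\{1\}_q^{2n}}{\{n\}_q!}$ has a well-defined, finite limit as $q\to\xi$ for \emph{every} $n$: for $n<N$ there is no pole at all, and for $n\geq N$ the order of vanishing of the numerator exceeds that of the denominator. (One can make this quantitative: $\{k\}_q$ for $1\le k\le N-1$ is nonzero at $\xi$, while $\{N\}_q,\{2N\}_q,\dots$ each vanish to order $1$, so $\{n\}_q!$ vanishes to order $\lfloor n/N\rfloor$, which is always $\le n < 2n$.) Hence each coefficient has a limit, and in fact for $n\geq 2N$ the limit is $0$.

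Next I would bring in the nilpotency that the specific modules provide. For $V_1\otimes V_2$ with $V_1,V_2\in\{W_m,{\mathcal Y}_m^{\pm}\}$, the operator $E^n\otimes F^n$ acts; but $E$ acts nilpotently on each of these modules, and I claim $E^N = 0$ and $F^N = 0$ on all of them after specialization in the relevant sense. This is immediate for $W_m$ with $m\le N$, and for the $W_{2N-m}, W_{2N+m}, W_{2N-m}$ summands appearing inside ${\mathcal Y}_m^{\pm}$ one checks directly from the action formulas that at $q=\xi$ the structure constants $[i]_q$, $[2N-m-1-i]_q$, etc., arrange so that $E^N$ and $F^N$ annihilate everything — indeed this is exactly the content of the two preceding propositions identifying ${\mathcal Y}_m^+$ with ${\mathcal P}_m^+$ and ${\mathcal Y}_m^-$ with ${\mathcal P}_{N-m}^-\oplus{\mathcal P}_{N-m}^-$, which are $\overline{\mathcal U}_\xi(sl_2)$-modules and hence honor $E^N=F^N=0$. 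Therefore in the action on $V_1\otimes V_2$, all terms with $n\ge N$ in $\Theta$ have $E^n\otimes F^n = 0$ once $q=\xi$, so only finitely many terms ($0\le n\le N-1$) survive, and on those the coefficients are already pole-free at $\xi$. This shows the action of $R$ on $V_1\otimes V_2$ specializes at $q=\xi$, and the limit is $(\xi^{1/2})^{H\otimes H}\sum_{n=0}^{N-1}\frac{\{1\}^{2n}}{\{n\}!}\xi^{n(n-1)/2}(E^n\otimes F^n)$, which is precisely \eqref{eq:r-matrixlog}.

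Finally I would address the subtlety in the $q^{\frac{1}{2}H\otimes H}$ (equivalently $(\xi^{1/2})^{H\otimes H}$) prefactor: $H$ is only defined up to the conventions fixed after \eqref{eq:r-matrixlog} — namely $\xi^H = K$ together with the normalization that $Hv = 0$ whenever $Kv=v$. On $W_m$ (and on the $W_\bullet$-summands of ${\mathcal Y}_m^\pm$) the weights of $K$ are powers $q^{j}$ with $j$ ranging over an explicit arithmetic progression, and one must check that substituting $q=\xi$ in $q^{\frac12 H\otimes H}$ (with $H$ read off from those exponents) agrees with the operator $(\xi^{1/2})^{H\otimes H}$ built from the $\overline{\mathcal U}_\xi(sl_2)$-normalization of $H$ on ${\mathcal P}_m^+$, ${\mathcal P}_{N-m}^-$. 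This is a matching of exponents modulo $2N$ on each weight space, using that $\xi^{1/2}=\exp(\pi\sqrt{-1}/2N)$ and that the weights in the $\overline{\mathcal U}_\xi$-modules are exactly the reductions of the generic ones; I expect this bookkeeping — reconciling the two definitions of $H$ on the reducible modules ${\mathcal Y}_m^\pm$, where $K$ is not semisimple-with-distinct-eigenvalues in the naive sense but the generalized weight spaces still carry well-defined $K$-eigenvalues — to be the only genuinely delicate point; everything else is the pole-order count plus the nilpotency already supplied by the two propositions above. With the prefactor matched and the finite sum matched term by term, the two actions coincide, proving the proposition.
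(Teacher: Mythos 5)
There is a genuine error at the first step of your argument. You claim that the numerator factor $\{1\}_q^{2n}$ vanishes to order $2n$ at $q=\xi$, and conclude that each coefficient $\{1\}_q^{2n}/\{n\}_q!$ has a finite limit as $q\to\xi$. But $\{1\}_\xi=\xi-\xi^{-1}=2\sqrt{-1}\,\sin(\pi/N)\neq 0$ for $N\geq 2$: the factor $\{1\}_q^{2n}$ does not vanish at $q=\xi$ at all (you appear to be conflating the specialization $q\to\xi$ with $q\to 1$). Consequently, for $n\geq N$ the coefficient $\{1\}_q^{2n}/\{n\}_q!$ has a genuine pole at $q=\xi$, of order $\lfloor n/N\rfloor$ by your own (correct) count of the zeros of $\{n\}_q!$. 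Your argument then collapses: you were going to combine ``each coefficient is finite'' (false) with ``$E^n\otimes F^n=0$ at $\xi$ for $n\geq N$'' (true) to kill the tail of the sum, but since the coefficient actually blows up, mere vanishing of $E^n\otimes F^n$ is not enough --- you must compare the order of the zero of the matrix entries against the order of the pole of the coefficient.

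That comparison is precisely the content of the paper's proof, and your own nilpotency observation can be upgraded to supply it: the identifications of ${\mathcal Y}_m^+$ with ${\mathcal P}_m^+$ and of ${\mathcal Y}_m^-$ with ${\mathcal P}_{N-m}^-\oplus{\mathcal P}_{N-m}^-$ show that every matrix entry of $E^N$ and of $F^N$ on these modules (which is regular at $q=\xi$) vanishes there to order at least one; writing $E^n=(E^N)^{\lfloor n/N\rfloor}E^{\,n\bmod N}$ and likewise for $F^n$, the entries of $E^n\otimes F^n$ vanish to order at least $2\lfloor n/N\rfloor$, which strictly dominates the pole of order $\lfloor n/N\rfloor$ coming from $1/\{n\}_q!$. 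Hence each term with $n\geq N$ specializes to $0$, the terms with $n<N$ are manifestly regular, and the limit is \eqref{eq:r-matrixlog}. The paper obtains the same order count directly, by counting the factors $[kN]_q$ among the consecutive quantum integers occurring in the matrix entries of $E^n$ and $F^n$. Your closing remarks about reconciling the two normalizations of $H$ in the prefactor $q^{\frac12 H\otimes H}$ point at a real (if minor) issue, which the paper disposes of by the convention fixed after \eqref{eq:r-matrixlog}; but as written your proof does not establish the proposition.
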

\begin{proof}
For a large $n$, $E^n$ and $F^n$ vanish on $V_1$ and $V_2$.  
Hence it is enough to show that every term of the universal $R$ matrix can be specialized at $q=\xi$.  
We investigate the degrees of zero at $q=\xi$ for the matrix elements of the representations $W_m$  and ${\mathcal Y}_m^{\pm}$.  
By counting the factors of the form $[k N]$ in the matrix elemsts of representations of $E^n$ and $F^n$ constructed in Sect. 2.1, we see that
the zero derees of them at $q=\xi$ are at least $\ell$ if $n \geq \ell N$.  
(Sometimes, they act by 0, whose degree is considered to be $\infty$ for any factor.)
Hence, for $\ell N \leq n < (\ell+1)\, N$,
the zero degree of the term $\frac{1}{\{n\}_q!}\, E^n \otimes F^n$  at $q=\xi$  is a least $\ell$ since its numerator has degree at least $2\ell$  and its denominator has degree $\ell$.  
Hence, the action of  the term $\frac{1}{\{n\}_q!}\, E^n \otimes F^n$  at $q=\xi$ is well-defined even if $n \geq N$, and the terms for $n \geq N$ are all specialized to 0 on $V_1 \otimes V_2$ at $q=\xi$.  \qed
\end{proof}
\subsection{Specialization of the representation on ${\mathcal Y}_m^\pm$ at $q = \xi$}
Let $W_m$ be the irreducible representation of  ${\mathcal U}_q(sl_2)$ introduced  in Sect.  1.3, 
and let $\rho_m$, $\eta_m^\pm$ be the homomorphisms from ${\mathcal U}_q(sl_2)$ to $W_m$ and ${\mathcal Y}_m^\pm$ respectively.  
Moreover, for a knot $L$, we define $\rho_m(L)$ and $\eta_m^\pm(L)$ by
$$
\rho_m(L) = \widetilde\tr\Big(\rho_{W_m}(b_L)\Big), \qquad
\eta_m^\pm(L) = \widetilde\tr\left(\rho_{{\mathcal Y}_m^\pm}(b_L)\right),
$$
where $\rho_{{\mathcal Y}_m^\pm}(b_L)$ is the representation of $b_L$ on ${\mathcal Y}_m^\pm \otimes \cdots \otimes {\mathcal Y}_m^\pm$ given in Sect.  1.5.  
Then $\rho_m(L)$ and $\eta_m^\pm(L)$ are elements of $\End(W_m)$ and $\End({\mathcal Y}_m^+)$ respectively.    
Moreover, $\rho_m(L)$ is a scalar matrix such that the corresponding scalara is the normalized colored Jones invariant $\widetilde V_m(L)$.  
The matrix $\eta_m^\pm(L)$ may not be scalar matrix since ${\mathcal Y}_m^\pm$ is not irreducible.  
However, for any $g \in {\mathcal U}_q(sl_2)$, $\eta_m^\pm(L)$  satifies 
$
\eta_m^\pm(g) \, \eta_m^\pm(L) = \eta_m^\pm(L) \, \eta_m^\pm(g)
$,   
and 
 $\eta_m^\pm(L)$ is given by the following with some scalars $x_k^\pm$.  
$$
\begin{aligned}
\eta_m^+(L) \, \balpha_{k}^+ 
&=
\begin{cases}
\widetilde V_{2N-m}(L) \, \balpha_{k}^+, & (0 \leq k \leq N-m-1)
\\
\widetilde V_{2N-m}(L) \, \balpha_{k}^+ + x_{k-N+m}^+ \, \bbeta_{k-N+m}^+,
& (N-m \leq k \leq N-1)
\\
\widetilde V_{2N-m}(L) \, \balpha_{k}^+, & (N \leq k \leq 2N-m-1)
\end{cases}
\\
\eta_m^+(L) \, \bbeta_{k}^+&
= \widetilde V_m(L) \, \bbeta_{k}^+, \qquad\qquad(0 \leq k \leq m-1)
\end{aligned}
$$
$$
\begin{aligned}
\eta_m^-(L) \, \balpha_{k}^- 
&=
\begin{cases}
\widetilde V_{2N+m}(L) \, \balpha_{k}^-, & (0 \leq k \leq m-1)
\\
\widetilde V_{2N+m}(L) \, \balpha_{k}^- + x_{k-m}^+ \, \bbeta_{k-m}^+,
\qquad
& (N-m \leq k \leq N-1)
\\
\widetilde V_{2N+m}(L) \, \balpha_{k}^-, & (N \leq k \leq 2N+m-1)
\end{cases}
\\
\eta_m^-(L) \, \bbeta_{k}^-&
= \widetilde V_{2N-m}(L) \, \bbeta_{k}^-. \qquad\qquad(0 \leq k \leq 2N-m-1)
\end{aligned}
$$
\par
Now, we obtain $x_0^\pm$.  
We have
\begin{multline*}
\eta_m^+(F) \, \eta_m^+(L) \, \balpha_{N-m-1}^+
=
\widetilde V_{2N-m}(L) \, \eta_m^+(F) \, \balpha_{N-m-1}^+
=
\\
\widetilde V_{2N-m}(L) \, 
\left([N]_q\, \balpha_{N-m}^+ + 
\left[\begin{matrix}N-1 \\ m-1\end{matrix}\right]_q \, \bbeta_0^+\right), 
\end{multline*}
and
\begin{multline*}
\eta_m^+(L) \,\eta_m^+(F)\,  \balpha_{N-m-1} ^+
=
\eta_m^+(L) \,\left([N]_q\, \balpha_{N-m} ^+ 
+
\left[\begin{matrix}N\!-\!1 \\ m\!-\!1\end{matrix}\right]_q \, \bbeta_0^+\right)
=
\\
[N]_q\, \widetilde V_{2N-m}(L) \, \balpha_{N-m}^+
+
[N]_q x_0^+  \bbeta_0^+
+ 
\widetilde V_m(L)\,
\left[\begin{matrix}N\!-\!1 \\ m\!-\!1\end{matrix}\right]_q  \bbeta_0^+.
\end{multline*}
Hence we get
\begin{equation}
x^+ = 
\left[\begin{matrix}N-1 \\ m-1\end{matrix}\right]_q \, \dfrac{\widetilde V_{2N-m}(L) - 
\widetilde V_{m}(L)}{[N]_q}. 
\label{eq:xp}
\end{equation}
We also get $x_0^-$ similarly as follows.  
\begin{multline*}
\eta_m^-(F) \, \eta_m^-(L) \, \balpha_{m-1}^-
=
\widetilde V_{2N+m}(L) \, \eta_m^-(F) \, \balpha_{m-1}^-
=
\\
\widetilde V_{2N+m}(L)\left([2N]_q\, \balpha_{m}^- + 
\left[\begin{matrix}2N-1 \\ 2N-m-1\end{matrix}\right]_q \, \bbeta_0^-\right), 
\end{multline*}
which is equal to  
\begin{multline*}
\eta_m^-(L)\, \eta_m^-(F) \,    \balpha_{m-1}^- 
=
\eta_m^-(L)\, \left(
[2N]_q \, \balpha_{m}^- +  \left[\begin{matrix}
2N-1 \\ 2N\!-\!m\!-\!1\end{matrix}\right]_q  \!\bbeta_{0}^-
\right)
=
\\
\widetilde V_{2N+m}(L) \, [2N]_q\, \balpha_{m}^-
+
[2N]_qx_0^- \, \bbeta_0^-
+ 
\widetilde V_{2N-m}(L)
\left[\begin{matrix}2N-1 \\ 2N\!-\!m\!-\!1\end{matrix}\right]_q \, \bbeta_0^-.
\end{multline*}
Hence we get
\begin{equation}
x^- = 
\left[\begin{matrix}2N-1 \\ m\end{matrix}\right]_q \, \dfrac{\widetilde V_{2N+m}(L) - 
\widetilde V_{2N-m}(L)}{[2N]_q}. 
\label{eq:xm}
\end{equation}
By specializing $q$ to  $\xi$ at \eqref{eq:xp} and \eqref{eq:xm},   
 we get
$$
\begin{aligned}
\lim_{q\to\xi} x_0^+ 
&=
\left.
-\dfrac{\xi}{2 \, N} \, \frac{d}{dq}  \{1\}_q\big( \widetilde V_{2N-m}(L) - 
 \widetilde V_{m}(L)\big)\right|_{q=\xi}, 
\\
\lim_{q\to\xi} x_0^- 
&=
\dfrac{(-1)^{m}\, \xi}{4\, N} \, 
\left.\frac{d}{dq} \{1\}_q\big(\widetilde  V_{2N+m}(L) - 
\widetilde V_{2N-m}(L)\big)\right|_{q=\xi},
\end{aligned}
$$
by using l'Hopital's rule.
Now we are ready to prove Propsition \ref{prop:b}.  
%\medskip
%\par\noindent
\subsection{Proof of Proposition \ref{prop:b}} 
\label{ss:proof}
We compare $x_0^\pm$ with  the coefficients $b_m^{\pm}(L)$ of $\bw_s^\pm$ introduced in \cite{MN}.  
Let $\widetilde\eta_{m}^+$,  $\widetilde\eta_{m}^-$ be the representations on ${\mathcal P}_{m}^+$ and ${\mathcal P}_{N-m}^-$ in \cite{MN}, and 
$\widetilde\eta_{m}^+(L)$, $\widetilde\eta_{m}^-(L)$ be the elements of $\End({\mathcal P}_{m}^+)$ and $\End({\mathcal P}_{N-m}^-)$ defined as $\eta_m^\pm(L)$.  
Then we have
$$
\begin{aligned}
\widetilde\eta_{m}^+(L) \, \bb_0^+
&= 
\left.\widetilde V_{2N-m}(L)\right|_{q=\xi}\,  \bb_0^+ + b_{m}^+(L) \, \ba_0^+
, 
\\
\widetilde\eta_{m}^-(L) \, \by_0^-
&= 
\left.\widetilde V_{2N+m}(L)\right|_{q=\xi} \,  \by_0^- + b_{m}^-(L) \, \bx_0^-.
\end{aligned}
$$   
From Proposition \ref{prop:R}, $\widetilde\eta_m^\pm(L)$ are essentially the same as the specialization of $\eta_m^\pm(L)$ at $q =\xi$.  
Since an isomorphism between two isomorphic projective modules is uniquely determined up to a scalar multiple, the matrices of $\widetilde\eta_m^\pm(L)$ and $\eta_m^\pm(L)$ shold be related by the isomorphisms $f$, $g$ in  \eqref{eq:cor1}, \eqref{eq:cor2} as follows.  
$$
\widetilde\eta_m^+(L) = f^{-1}\circ \left.\eta_m^+(L)\right|_{q=\xi}\circ f, 
\qquad
\widetilde\eta_m^-(L) = g^{-1}\circ \left.\eta_m^-(L)\right|_{q=\xi}\circ g. 
$$
Hence, by using
$$f(\bb_0^+) = \balpha_{N-m}^+,\ \  
f(\ba_0^+) = [m]\, \bbeta_0^+,\ \  
g(\by_0^-) = \balpha_{m}^-, \ \ 
g(\bx_0^-) = (-1)^{m}\, [m] \, \bbeta_0^-, 
$$
we have  
\begin{equation}
\begin{aligned}
b_{m}^+ (L)
&=
\left.
\dfrac{\xi}{2 \, N\, [m]} \,\frac{d}{dq}  \{1\}_q\big(\widetilde V_{m}(L) - 
\widetilde V_{2N-m}(L)\big)\right|_{q=\xi},
\\
b_{m}^-(L) 
&=
\left.
\dfrac{\xi}{4 \, N\,  [m]} \,\frac{d}{dq} \{1\}_q\big( \widetilde V_{2N+m}(L) - 
\widetilde V_{2N-m}(L)\big)\right|_{q=\xi}. \qquad\qquad\qed
\end{aligned}
\label{eq:centervalue}
\end{equation}
\subsection{Habiro's formula}
Let $s$ be an integer satisfying $1 \leq s \leq N-1$ and
put 
$$
\underline s = \min(s, N-s),\qquad 
\overline s = \max(s, N-s).
$$
By using Habiro's universal formula \eqref{eq:universal},  $b_{s}^\pm(L)$ is expressed in terms of $a_i(L)$ as follows.    
We put $a_i(L)_\xi = \left.a_i(L)\right|_{q=\xi}$.  
\begin{proposition}
\label{prop:Habiro}
For a knot $L$, we have
\begin{multline}
%\begin{aligned}
b_{s}^+(L)= b_{s}^-(L) =
\\
\dfrac{\{1\}^2}{\{s\}}\! \left(\!
\sum_{i=0}^{\underline s -1} 
\frac{a_i(L)_\xi  \{s+i\}!}{[s]\{s-i-1\}!}\!
\sum_{{\genfrac{}{}{0pt}{}{s-i \leq k \leq s+i}
{k \neq s}}}\!\!\!\!
\frac{\{k\}_+}{\{k\}} 
%\right.\\\left.
 +2\sum_{i={\underline s}}^{\overline s -1}
a_i(L)_\xi \widetilde{\{s+i, i\}}\widetilde{\{s-1, i\}}\!\right)\!,
%\end{aligned}
\end{multline}
where $\widetilde{\{n, j\}}$ is given by the following.
\begin{equation}
\widetilde{ \{n, j\}}  = 
\prod_{\genfrac{}{}{0pt}{}{0\leq k \leq j-1}{n-k = Nt}}(-1)^t
\prod_{\genfrac{}{}{0pt}{}{0\leq k \leq j-1}{n-k \notin N{\mathbf Z}}}\{n-k\}_q.  
\label{eq:tilde}
\end{equation}
\end{proposition}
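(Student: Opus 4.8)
The plan is to start from the clean formulas for $b_s^\pm(L)$ established in \eqref{eq:centervalue}, substitute Habiro's universal expansion \eqref{eq:universal} for each of the colored Jones invariants appearing there, and then carry out the $q\to\xi$ limit term by term. Recall from \eqref{eq:centervalue} that, up to the common prefactor $\dfrac{\xi}{2N[s]}$ (resp. $\dfrac{\xi}{4N[s]}$), one has $b_s^+(L)$ governed by $\frac{d}{dq}\{1\}_q\bigl(\widetilde V_s(L)-\widetilde V_{2N-s}(L)\bigr)$ and $b_s^-(L)$ by $\frac{d}{dq}\{1\}_q\bigl(\widetilde V_{2N+s}(L)-\widetilde V_{2N-s}(L)\bigr)$, both evaluated at $q=\xi$. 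Writing $\widetilde V_m(L)=V_m(L)/[m]=\sum_i a_i(L)\dfrac{\{m+i,2i+1\}_q}{\{1\}_q[m]_q}$, the quantity $\{1\}_q\widetilde V_m(L)$ becomes $\sum_i a_i(L)\dfrac{\{m+i,2i+1\}_q}{[m]_q}=\sum_i a_i(L)\{1\}_q\dfrac{\{m+i,2i+1\}_q}{\{m\}_q}$, a Laurent polynomial in $q$ for each fixed $i$, so the $q$-derivative is legitimate term by term and the limit is a genuine finite sum once we check which terms survive.

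The first key step is a vanishing/order-of-zero analysis at $q=\xi$ for the building block $\{m+i,2i+1\}_q/\{m\}_q$ with $m\in\{s,\,2N-s,\,2N+s\}$. The factor $\{m+i,2i+1\}_q=\prod_{k=0}^{2i}\{m+i-k\}_q$ is a product over the symmetric window $m-i\le j\le m+i$ of $\{j\}_q$; the factor $\{j\}_q$ vanishes at $q=\xi$ exactly when $j\equiv0\pmod N$, and $\{m\}_q$ in the denominator vanishes precisely when $m\equiv0\pmod N$. So for the three relevant values of $m$ the denominator is nonzero at $\xi$, and the whole block vanishes at $\xi$ whenever the window $[m-i,m+i]$ contains a multiple of $N$; it is nonzero there otherwise. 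For $m=s$ the window meets $N\mathbf Z$ iff $i\ge s$ or $i\ge N-s$, i.e. iff $i\ge\underline s$; and similarly one tracks the windows for $2N-s$ and $2N\pm s$. This splits the $i$-sum into a ``low'' range $0\le i\le\underline s-1$, where each block is nonzero at $\xi$ and the $q$-derivative of the difference of two nonvanishing analytic functions must be computed, and a ``high'' range $\underline s\le i\le\overline s-1$ (and beyond), where the blocks vanish at $\xi$ and only the derivative of the lowest-order term contributes — this is exactly where the normalized products $\widetilde{\{n,j\}}$ of \eqref{eq:tilde} enter, being the ``regularized'' value $\lim_{q\to\xi}\{m+i,j\}_q/\{N\}_q^{(\text{mult})}$ combined with l'Hôpital against $\{1\}_q$. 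One then matches powers: in the low range the factor $\sum_{s-i\le k\le s+i,\,k\ne s}\{k\}_+/\{k\}$ arises as the logarithmic derivative $\frac{d}{dq}\log\prod_{k}\{k\}_q$ of the surviving block, the $k=s$ term being excluded because it cancels against the $\{m\}_q$ denominator; in the high range the coefficient $2\,\widetilde{\{s+i,i\}}\,\widetilde{\{s-1,i\}}$ appears, the factor $2$ reflecting that both $\widetilde V_s$ and $\widetilde V_{2N-s}$ (resp. the $b^-$ combination) contribute a vanishing block whose derivative has the same leading value. The proof of the identity $b_s^+(L)=b_s^-(L)$ then follows because, once the windows are analyzed at $\xi$, the ``$2N-s$'' contribution that is common to both formulas dominates in the high range while the ``$s$'' versus ``$2N+s$'' discrepancy only affects the low range where, after the l'Hôpital bookkeeping, the two expressions are seen to agree — this coincidence is what makes the statement a single formula rather than two.

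The main obstacle is the high-range term: making precise the claim that $\lim_{q\to\xi}\frac{d}{dq}\{1\}_q\dfrac{\{m+i,2i+1\}_q}{\{m\}_q}$ equals, up to sign and the stated normalizations, a product of two $\widetilde{\{\cdot,\cdot\}}$-factors. This requires (i) identifying how many factors of $N$-type zeros the window carries, (ii) applying l'Hôpital with the single extra factor $\{1\}_q$ to cancel exactly one order of zero, and (iii) reorganizing the symmetric product $\prod_{m-i\le j\le m+i}\{j\}_q$ around the midpoint $m$ into the asymmetric pieces $\{m+i,i\}_q$ and $\{m-1,i\}_q$ (the missing central factor $\{m\}_q$ having cancelled), then specializing each piece via the definition \eqref{eq:tilde} with its sign $(-1)^t$ for each near-$N\mathbf Z$ factor. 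The low range is comparatively routine: it is a direct l'Hôpital computation on a ratio whose numerator and denominator both vanish to first order (from the single $\{1\}_q$), yielding the logarithmic-derivative sum $\sum_{k\ne s}\{k\}_+/\{k\}$. Throughout I would keep $L$ fixed and work coefficient-by-coefficient in $a_i(L)_\xi$, which is legitimate since for each $i$ the relevant function of $q$ is a Laurent polynomial, and only finitely many $i$ contribute a nonzero limit because for $i\ge\overline s$ the windows for all of $s$, $2N-s$, $2N+s$ simultaneously contain the same pattern of multiples of $N$ and the differences cancel identically.
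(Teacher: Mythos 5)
Your proposal follows essentially the same route as the paper's proof: substitute Habiro's expansion into \eqref{eq:centervalue}, split the $i$-sum at $i=\underline{s}$ according to whether the window $[m-i,m+i]$ meets $N\mathbb{Z}$, obtain the logarithmic-derivative sums $\sum_k \{k\}_+/\{k\}$ in the low range (where the $\frac{d}{dq}a_i(L)$ contributions cancel in the difference), and evaluate the vanishing blocks in the high range by l'H\^opital against $\{N\}_q$ to produce the regularized products $\widetilde{\{\cdot,\cdot\}}$, exactly as in the paper's relations \eqref{eq:relation1}--\eqref{eq:relation5}. The one imprecision is your heuristic for $b_s^+=b_s^-$: when $s>N/2$ the $V_s$ and $V_{2N+s}$ blocks do contribute in the high range (with regularized limits $-1$ and $3$ times the common product, respectively), so the factor of two that compensates the prefactors $\xi/(2N[s])$ versus $\xi/(4N[s])$ must be checked in both ranges and in both cases $s\le N/2$ and $s>N/2$, which is precisely the case analysis the paper carries out.
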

\begin{corollary}
\label{cor:cor}
Let $L^f$ be the framed knot with framing $f$ which is isotopic to $L$ as a non-framed knot.  
The colored Jones invariant $V_m$ is generalized to a framed knot by
$V_m(L^f) = q^{\frac{m^2-1}{2}f} \, V_m(L)$, 
and the invariants $b_{s}^+(L^f)$ and $b_{s}^-(L^f)$ 
are generalized as follows.  
\begin{equation}
\begin{aligned}
b_{s}^+(L^f)
&=
q^{\frac{s^2-1}{2}f} \, b_{s}^+(L) 
+ 
\dfrac{(-N + s) \, f \, 
\{1\} }{[s]^2}\, \left.V_{s}(L)\right|_{q=\xi},\\
b_{s}^-(L^f) 
&=
q^{\frac{s^2-1}{2}f} \, b_{s}^-(L) 
+ 
\dfrac{s\, f \, 
\{1\} }{[s]^2}\, \left.V_{s}(L)\right|_{q=\xi}.  
\end{aligned}
\end{equation}
\end{corollary}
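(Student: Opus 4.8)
The plan is to read off both identities from the closed expressions for $b_s^{\pm}(L)$ in Proposition~\ref{prop:b} (equivalently \eqref{eq:centervalue}) by feeding in the framing behaviour of the colored Jones invariant and then expanding the resulting $q$-derivative. The first assertion, $V_m(L^f)=q^{\frac{m^2-1}{2}f}V_m(L)$, is the standard framing anomaly and may be taken as the definition of $V_m$ on framed knots: a positive kink of the $(1,1)$-tangle contributes the twist element, which acts on the irreducible $W_m$ of highest weight $m-1$ by $q^{\frac{(m-1)(m+1)}{2}}=q^{\frac{m^2-1}{2}}$, which is built into the $R$-matrix construction of $V_m$. Hence $\widetilde V_m(L^f)=q^{\frac{m^2-1}{2}f}\widetilde V_m(L)$ as functions of $q$, and I would first note that the derivation in Sect.~\ref{ss:proof} carries over verbatim with $\widetilde V_{\bullet}(L)$ replaced by $\widetilde V_{\bullet}(L^f)$ throughout --- the only inputs used there were that $\eta_m^{\pm}(L^f)$ acts by a scalar on each generic summand of $\mathcal Y_m^{\pm}$ and commutes with the algebra action, neither of which is affected by adding kinks --- so that \eqref{eq:centervalue} holds for $L^f$ with every $\widetilde V_{\bullet}(L)$ replaced by $\widetilde V_{\bullet}(L^f)$.

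Now abbreviate $F_m(q)=q^{\frac{m^2-1}{2}f}$ and $g_m(q)=\{1\}_q\widetilde V_m(L)=\{1\}_q V_m(L)/[m]_q$, so that $\{1\}_q\widetilde V_m(L^f)=F_m(q)\,g_m(q)$ and Proposition~\ref{prop:b} reads $b_s^{+}(L)=\tfrac{\xi}{2N[s]}\tfrac{d}{dq}\big(g_s-g_{2N-s}\big)\big|_{q=\xi}$, $b_s^{-}(L)=\tfrac{\xi}{4N[s]}\tfrac{d}{dq}\big(g_{2N+s}-g_{2N-s}\big)\big|_{q=\xi}$. Two elementary facts are needed. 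First, $\tfrac{(2N\pm s)^2-1}{2}-\tfrac{s^2-1}{2}\in 2N\mathbb Z$, so $\xi^{2N}=1$ forces $F_{2N\pm s}(\xi)=F_s(\xi)=\xi^{\frac{s^2-1}{2}f}$ and $\tfrac{d}{dq}F_{2N\pm s}\big|_{\xi}=\tfrac{(2N\pm s)^2-1}{2}f\,\xi^{\frac{s^2-1}{2}f-1}$. Second, the root-of-unity symmetries $[2N\pm s]_\xi=\pm[s]$ and $V_{2N\pm s}(L)|_{q=\xi}=\pm V_s(L)|_{q=\xi}$, both immediate from Habiro's expansion \eqref{eq:universal} and $\xi^{2N}=1$: indeed $\{2N+s+i,2i+1\}_\xi=\{s+i,2i+1\}_\xi$, while $\{2N-s+i,2i+1\}_\xi=-\{s+i,2i+1\}_\xi$ (the sign $(-1)^{2i+1}$ coming from negating the $2i+1$ factors). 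In particular $g_{2N+s}(\xi)=g_{2N-s}(\xi)=\{1\}\,V_s(L)|_{q=\xi}/[s]$.

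The computation is then a Leibniz expansion. For $b_s^{+}(L^f)=\tfrac{\xi}{2N[s]}\tfrac{d}{dq}\big(F_s g_s-F_{2N-s}g_{2N-s}\big)\big|_{q=\xi}$: in the part where the derivative falls on the $g$'s, $F_s(\xi)=F_{2N-s}(\xi)$ factors out of $\tfrac{d}{dq}(g_s-g_{2N-s})|_\xi$ and produces $q^{\frac{s^2-1}{2}f}b_s^{+}(L)$; in the part where it falls on the $F$'s, $g_s(\xi)=g_{2N-s}(\xi)$ factors out and leaves $\big(\tfrac{s^2-1}{2}-\tfrac{(2N-s)^2-1}{2}\big)f\,\xi^{\frac{s^2-1}{2}f-1}=2N(s-N)f\,\xi^{\frac{s^2-1}{2}f-1}$, which after cancelling the explicit $\tfrac{\xi}{2N[s]}$ and substituting $g_s(\xi)=\{1\}V_s(L)|_{q=\xi}/[s]$ is the correction term $\tfrac{(-N+s)f\{1\}}{[s]^2}V_s(L)|_{q=\xi}$. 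For $b_s^{-}(L^f)=\tfrac{\xi}{4N[s]}\tfrac{d}{dq}\big(F_{2N+s}g_{2N+s}-F_{2N-s}g_{2N-s}\big)\big|_{q=\xi}$ the same two steps apply, with $\tfrac{(2N+s)^2-1}{2}-\tfrac{(2N-s)^2-1}{2}=4Ns$ replacing $2N(s-N)$, producing $\tfrac{sf\{1\}}{[s]^2}V_s(L)|_{q=\xi}$; summing the two parts in each case gives the two displayed formulas.

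The delicate point --- and where sign or normalization slips are likeliest --- is keeping track of the framing monomials $q^{\frac{(2N\pm s)^2-1}{2}f}$ and the order of operations: one differentiates in $q$, retaining the genuine $q$-dependence of both $V_m(L)$ and $[m]_q$, and only afterwards specializes to $q=\xi$; the symmetries $[2N\pm s]_\xi=\pm[s]$ and $V_{2N\pm s}(L)|_{q=\xi}=\pm V_s(L)|_{q=\xi}$ are precisely what make the a priori singular-looking pieces finite and collapse the extra $\widetilde V_{\bullet}$-terms into a single multiple of $V_s(L)|_{q=\xi}$. As an independent check I would rerun the argument through the ribbon element of $\overline{\mathcal U}_\xi(sl_2)$: writing $z(T_{L^f})=\theta^{f}z(T_L)$, decomposing $\theta$ on the block $\be_s$ as a scalar multiple of $\be_s$ plus a nilpotent multiple of $\bw_s^{\pm}$, expanding $\theta^{f}$ with $(\bw_s^{\pm})^2=\bw_s^{+}\bw_s^{-}=0$, and using that $a_s(L)$ equals the normalized colored Jones $\widetilde V_s(L)$ at $q=\xi$ --- this reproduces the same shape but needs the explicit ribbon element, so the route through Proposition~\ref{prop:b} is the more economical one here.
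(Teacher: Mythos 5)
Your argument is correct and is essentially the paper's own proof: both apply \eqref{eq:centervalue} to $L^f$ via $\widetilde V_m(L^f)=q^{\frac{m^2-1}{2}f}\,\widetilde V_m(L)$, expand the $q$-derivative by the Leibniz rule, and use the coincidences at $q=\xi$ of the framing phases and of the normalized invariants to collapse the correction term to the exponent differences $2N(s-N)$ and $4Ns$. The only cosmetic remark is that, exactly as in the paper's computation, the residual phase $\xi^{\frac{s^2-1}{2}f}$ produced by differentiating $q^{\frac{m^2-1}{2}f}$ is dropped from the correction term (equivalently, $V_s(L)|_{q=\xi}$ there could be read as $V_s(L^f)|_{q=\xi}$), so your derivation matches the paper's line for line.
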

\par\noindent{\it Proof of Proposition \ref{prop:Habiro}.}
Let $\tilde a_i^{}(L) = \{1\}_q \, a_i(L)$ for $a_i(L)$ in  \eqref{eq:universal}
 and $\tilde a_i^{}(L)_\xi = \left.\tilde a_i^{}(L)\right|_{q=\xi}$,  
then
%$$
%\frac{\{1\}_q}{[s]_q} \, V_{s-1}(L)
%= 
%%\frac{\tilde a_i^{}(L) \, \{s+i\}_q!}{\{s\}_q \{ s-i\}_q!},
%$$
%and
$$
\dfrac{d}{dq} \left. \dfrac{\{1\}_q V_{s}(L)}{[s]_q}\right|_{q = \xi}
%\\
= 
%\\
\sum_{i=0}^{s-1} \frac{\tilde a_i^{}(L)_\xi \, \{s+i\}!}{\{s\} \,\{ s-i-1\}!}
\left( \left.\dfrac{\frac{d}{dq} \, \tilde a_i^{}(L)}{\tilde a_i^{}(L)}\right|_{q=\xi} + 
\sum_{{\genfrac{}{}{0pt}{}{s-i \leq k \leq s+i}
{k \neq s}}}
\frac{k\,\{k\}_+}{\xi \, \{k\}}\right), 
$$
where $\{k\}_+ = \xi + \xi^{-1}$.  
Now we compute  \eqref{eq:differential} 
by usinge
$\{2N-k\} = -\{k\}$ and 
$
\left.\frac{d}{dq}F(q)\right|_{q=\xi} 
= 
- \frac{2\, N}{\xi} \, \lim_{q\to\xi}
\frac{F(q)}{\{N\} }  
$ for a function $F(q)$ of $q$.  
$$
\begin{aligned}
\frac{d}{dq} & \{1\}_q \left.\left(\frac{V_{s}(L)}{[s]_q}\,  - 
\frac{V_{2N-s}(L)}{[2N-s]_q} \, \right) \right|_{q=\xi} 
= 
\qquad\qquad\qquad\qquad
\qquad\qquad\qquad\qquad
\\
&\sum_{i=0}^{\underline s-1} 
\frac{\tilde a_i^{}(L)_\xi \, \{s+i\}!}{\{s\} \,\{ s-i-1\}!}
%\\&\qquad\qquad
 \left( \left.\dfrac{\frac{d}{dq} \, \tilde a_i^{}(L)}{\tilde a_i^{}(L)}\right|_{q=\xi} + 
\sum_{{\genfrac{}{}{0pt}{}{s-i \leq k \leq s+i}
{k \neq s}}}
\frac{ k\,\{k\}_+}{\xi \, \{k\}}\right) 
\\&\qquad\qquad\qquad
-
\sum_{i={\underline s}}^{s-1}
\frac{2 \, N}{\xi} \lim_{q\to\xi} \dfrac{\tilde a_i^{}(L) \,  \{s+i, i\}_q \, \{s-1, i\}_q}{\{N\}_q}
\end{aligned}
$$
$$
\begin{aligned}
%\\
%\end{aligned}
%$$
%$$
%\begin{aligned}
-&
\sum_{i=0}^{\underline s-1} 
\frac{\tilde a_i^{}(L)_\xi \, \{2N-s+i\} !}{\{2N-s\}\,\{2N-s-i-1\}!}%\times
%\\
%&\qquad\qquad\qquad\qquad 
\left(\left. \dfrac{\frac{d}{dq} \, 
\tilde a_i^{}(L)}{\tilde a_i^{}(L)}\right|_{q = \xi} + 
\sum_{\genfrac{}{}{0pt}{}{2N-s-i\leq k \leq 2N-s+i}{k \neq 2N-s}} 
\dfrac{k \, \{k\}_+}{\xi \, \{k\}}  \right) 
\\&\qquad\qquad\qquad
+
\sum_{i={\underline s}}^{\overline s-1} 
 \frac{2 \, N}{\xi}\lim_{q\to\xi} \dfrac{\tilde a_i^{}(L) \, \{2N-s+i, i\}_q \, \{2N-s-1, i\}_q}{\{N\}_q}.  
 \end{aligned}
 $$
For $\underline s \leq i \leq s-1$, we have
\begin{equation}
\lim_{q\to\xi} \frac{ \{s+i, i\}_q \, \{s-1, i\}_q}{\{N\}_q}
=
\begin{cases}
0 & \text{if $s \leq \frac{N}{2}$}, \\
-\widetilde{\{s+i, i\}}\, \{s-1, i\} & \text{if $s > \frac{N}{2}$},
\end{cases}
\label{eq:relation1}
\end{equation}
and for $\underline s \leq i \leq \overline s-1$, 
\begin{equation}
\lim_{q\to\xi} \dfrac{\{2N-s+i, i\}_q \, \{2N-s-1, i\}_q}{\{N\}_q}
=
\begin{cases}
2  \{s+i, i\} \widetilde{\{s-1, i\}} & \text{if $s \leq \frac{N}{2}$}, \\
\widetilde{\{s+i, i\}}\, \{s-1, i\} & \text{if $s > \frac{N}{2}$}.
\end{cases}
\label{eq:relation2}
\end{equation}
We also know that
\begin{equation}
 \{2N-k\} = -\{k\}, 
 \qquad
\frac{\{2N-s+i\}!}{\{2N-s\} \,\{2N-s-i-1\}!}
=
\frac{\{s+i\}!}{\{s\} \,\{ s-i-1\}!}.
\label{eq:relation3}
\end{equation}
By using \eqref{eq:relation1}, \eqref{eq:relation2}, \eqref{eq:relation3}, we get the following.  
If $s \leq N/2$, then
$$
\begin{aligned}
\frac{d}{dq} & \{1\}_q \left.
\left(\frac{V_{s}(L)}{[s]_q}\,  - 
\frac{V_{2N-s}(L)}{[2N-s]_q} \, \right) \right|_{q=\xi} 
\\
=&\sum_{i=0}^{\underline s-1} 
\frac{\tilde a_i^{}(L)_\xi \, \{s+i\}!}{\{s\} \,\{ s-i-1\}!}
%\\&\qquad\qquad
 \left( \left.\dfrac{\frac{d}{dq} \, \tilde a_i^{}(L)}{\tilde a_i^{}(L)}\right|_{q=\xi} + 
\sum_{{\genfrac{}{}{0pt}{}{s-i \leq k \leq s+i}
{k \neq s}}}
\frac{ k\,\{k\}_+}{\xi \, \{k\}}\right) 
\\
%\end{aligned}
%$$
%$$
%\begin{aligned}
&-
\sum_{i=0}^{\underline s-1} 
\frac{\tilde a_i^{}(L)_\xi \, \{s+i\} !}{\{s\}\,\{s-i-1\}!}%\times
%\\
%&\qquad\qquad\qquad\qquad 
\left(\left. \dfrac{\frac{d}{dq} \, 
\tilde a_i^{}(L)}{\tilde a_i^{}(L)}\right|_{q = \xi} - 
\sum_{\genfrac{}{}{0pt}{}{s-i\leq k \leq s+i}{k \neq s}} 
\dfrac{(2N-k) \, \{k\}_+}{\xi \, \{k\}}  \right) 
\\&\qquad\qquad\qquad
+
\frac{4 \, N}{\xi}
\sum_{i={\underline s}}^{\overline s-1} 
 \tilde a_i^{}(L)_\xi \, {\{s+i, i\}} \, \widetilde{\{s-1, i\}}
\\
=&\dfrac{2\, N}{\xi}
\sum_{i=0}^{\underline s-1} 
\frac{\tilde a_i^{}(L)_\xi  \{s+i\}!}{\{s\} \,\{ s-i-1\}!}
\sum_{{\genfrac{}{}{0pt}{}{s-i \leq k \leq s+i}
{k \neq s}}}
 \dfrac{\{k\}_+}{\{k\}} 
%\\&\qquad\qquad
+
\dfrac{4\, N}{\xi} 
\sum_{i={\underline s}}^{\overline s-1}
\tilde a_i^{}(L)_\xi  \widetilde{\{s+i, i\}} \widetilde{\{s-1, i\}}. \end{aligned}
 $$
 If $s > N/2$, then
 $$
\begin{aligned}
\frac{d}{dq} & \{1\}_q \left.
\left(\frac{V_{s}(L)}{[s]_q}\,  - 
\frac{V_{2N-s}(L)}{[2N-s]_q} \, \right) \right|_{q=\xi} 
\\
=&\sum_{i=0}^{\underline s-1} 
\frac{\tilde a_i^{}(L)_\xi \, \{s+i\}!}{\{s\} \,\{ s-i-1\}!}
%\\&\qquad\qquad
 \left( \left.\dfrac{\frac{d}{dq} \, \tilde a_i^{}(L)}{\tilde a_i^{}(L)}\right|_{q=\xi} + 
\sum_{{\genfrac{}{}{0pt}{}{s-i \leq k \leq s+i}
{k \neq s}}}
\frac{ k\,\{k\}_+}{\xi \, \{k\}}\right) 
\\&\qquad\qquad\qquad
+
\frac{2 \, N}{\xi} \sum_{i={\underline s}}^{s-1}
 \tilde a_i^{}(L)_\xi \,  \widetilde{\{s+i, i\}}\, \{s-1, i\}
\\
%\end{aligned}
%$$
%$$
%\begin{aligned}
&-
\sum_{i=0}^{\underline s-1} 
\frac{\tilde a_i^{}(L)_\xi \, \{s+i\} !}{\{s\}\,\{s-i-1\}!}%\times
%\\
%&\qquad\qquad\qquad\qquad 
\left(\left. \dfrac{\frac{d}{dq} \, 
\tilde a_i^{}(L)}{\tilde a_i^{}(L)}\right|_{q = \xi} - 
\sum_{\genfrac{}{}{0pt}{}{s-i\leq k \leq s+i}{k \neq s}} 
\dfrac{(2N-k) \, \{k\}_+}{\xi \, \{k\}}  \right) 
\\&\qquad\qquad\qquad 
+
\frac{2 \, N}{\xi}
\sum_{i={\underline s}}^{\overline s-1} 
 \tilde a_i^{}(L)_\xi \, \widetilde{{\{s+i, i\}}} \, {\{s-1, i\}}
 \end{aligned}
 $$
 $$
 \begin{aligned}
%\\
=&\dfrac{2\, N}{\xi}
\sum_{i=0}^{\underline s-1} 
\frac{\tilde a_i^{}(L)_\xi  \{s+i\}!}{\{s\} \,\{ s-i-1\}!}
\sum_{{\genfrac{}{}{0pt}{}{s-i \leq k \leq s+i}
{k \neq s}}}
 \dfrac{\{k\}_+}{\{k\}} 
%\\&\qquad\qquad
%\\
+
\dfrac{4\, N}{\xi} 
\sum_{i={\underline s}}^{\overline s-1}
\tilde a_i^{}(L)_\xi  \widetilde{\{s+i, i\}} \widetilde{\{s-1, i\}}. 
\end{aligned}
 $$
 Therefore, for all $s$ with $1 \leq s \leq N-1$, we have
\begin{multline}
 \frac{d}{dq} \, \{1\}_q \left.
\left(\frac{V_{s}(L)}{[s]_q}\,  - 
\frac{V_{2N-s}(L)}{[2N-s]_q} \, \right) \right|_{q=\xi} 
=
\\
\dfrac{2\, N}{\xi}
\sum_{i=0}^{\underline s-1} 
\frac{\tilde a_i^{}(L)_\xi  \{s+i\}!}{\{s\} \,\{ s-i-1\}!}
\!\!\!\!
\sum_{{\genfrac{}{}{0pt}{}{s-i \leq k \leq s+i}
{k \neq s}}}
\!\!\!\!\!
 \dfrac{\{k\}_+}{\{k\}} 
+
\dfrac{4\, N}{\xi} 
\sum_{i={\underline s}}^{\overline s-1}
\tilde a_i^{}(L)_\xi  \widetilde{\{s+i, i\}} \widetilde{\{s-1, i\}}. 
\label{eq:bp}
\end{multline}
%\par
Similarly we get
\begin{multline}
\left.
\frac{d}{dq} \{1\}_q \left(\frac{V_{2N+s}(K)}{[2N+s]_q} - 
\frac{V_{2N-s}(L)}{[2N-s]_q}\right) \right|_{q=\xi} 
=
\\
\dfrac{4\, N}{\xi} \sum_{i=0}^{\underline s-1} 
\frac{\tilde a_i^{}(L)_\xi \{s+i\}!}{\{s\} \{ s-i-1\}!}
\!\!\!\!
\sum_{{\genfrac{}{}{0pt}{}{s-i \leq k \leq s+i}
{k \neq s}}}
\!\!\!\!\!
 \dfrac{\{k\}_+}{\{k\}} 
%\\&\qquad\qquad
+
\dfrac{8\, N}{\xi}
\sum_{i={\underline s}}^{\overline s-1}\!
\tilde a_i^{}(L)_\xi  \widetilde{\{s+i, i\}}  \widetilde{\{s-1, i\}},
\label{eq:bm}
\end{multline}
by using \eqref{eq:relation2}, \eqref{eq:relation3},
\begin{equation}
\lim_{q\to\xi} \dfrac{\{2N+s+i, i\}_q \, \{2N+s-1, i\}_q}{\{N\}_q}
=
\begin{cases}
2  \{s+i, i\}\widetilde{\{s-1, i\}} & \text{if $s \leq \frac{N}{2}$}, \\
3  \widetilde{\{s+i, i\}}\{s-1, i\} & \text{if $s > \frac{N}{2}$},
\end{cases}
\label{eq:relation4}
\end{equation}
and
\begin{equation}
\frac{\{2N+s+i\}!}{\{2N+s\} \,\{2N+s-i-1\}!}
=
\frac{\{s+i\}!}{\{s\} \,\{ s-i-1\}!}.
\label{eq:relation5}
\end{equation}
Combining \eqref{eq:centervalue} with \eqref{eq:bp} and \eqref{eq:bm}, 
we get
$$
\begin{aligned}
&b_{s}^+(L) = b_{s}^-(L) =
\\
&
\dfrac{\{1\}^2}{\{s\}} \! \left(
\sum_{i=0}^{\underline s-1} 
\frac{a_i(L)_\xi \{s+i\}!}{\{s\} \{ s-1-i\}!}\!\!
\sum_{{\genfrac{}{}{0pt}{}{s-i \leq k \leq s+i}
{k \neq s}}}\!\!
 \dfrac{ \{k\}_+}{\{k\}} 
\right.
%\\&\qquad\qquad
\left.\!\!
 +2\sum_{i={\underline s}}^{\overline s-1}
a_i(L)_\xi  \,\widetilde{\{s+i, i\}} \, \widetilde{\{s-1, i\}}\right).\ 
\qed
\end{aligned}
$$
\par\noindent{\it Proof of Corollary \ref{cor:cor}.}
Since $V_{s}(L^f) = q^{\frac{s^2-1}{2}f}\,V_{s}(L)$, 
$\left.V_s(L^f)\right|_{q=\xi} = \left.V_{2N-s}(L^f)\right|_{q=\xi}$
$=
\left.V_{2N+s}(L^f)\right|_{q=\xi}$, 
and $b_s^+(L^f)$, $b_s^-(L^f)$ are given by \eqref{eq:centervalue}, we have
$$
\begin{aligned}
b_{s}^+(L^f) 
&=
\xi^{\frac{s^2-1}{2}f} \, b_{s}^+(L) 
+ 
\dfrac{1}{2N} 
\left(\frac{s^2-1}{2} - \frac{(2N-s)^2 -1}{2}\right) 
\frac{f \, 
\{1\}}{[s]^2}  \left.V_{s}(L)\right|_{q=\xi}
\\
&=
\xi^{\frac{s^2-1}{2}f} \, b_{s}^+(L) 
- 
f\,  
\frac{( N-s) \, \{1\} }{[s]^2} \left.V_{s}(L)\right|_{q=\xi},
\\
b_{s}^-(L^f) 
&= 
\xi^{\frac{s^2-1}{2}f}\, b_{s}^-(L) 
+ 
f \, 
\frac{s \, \{1\} }{[s]^2}\left.V_{s}(L)\right|_{q=\xi}.
\hspace{5cm} \qed
\end{aligned}
$$
\subsection{Coefficients of centers}
The center of the restricted quantum group $\overline{\mathcal U}_{\xi}(sl_2)$ are spanned by $\be_0$, $\cdots$, $\be_{N}$, $\bw_1^{\pm}$, $\cdots$, $\bw_{N-1}^{\pm}$, where $\be_s$ is the central idempotent and $\bw_s^\pm$ is the center in the radical of ${\mathcal P}_s^\pm$. 
%See Appendix C for the detail. 
For a framed knot $L^f$, let $z = z({L^f})$ be the center of $\overline{\mathcal U}_\xi(sl_2)$  determined from $L^f$ by using a tangle $T_{L^f}$ obtained from $L^f$.  
Then $z$ is expressed as a linear combination of the good basis \eqref{eq:firstbasis}. 
 $$
z({L^f}) =
\sum_{s=1}^{N-1}
\alpha_s^{(N)}({L^f}) \, \hat \brho_s 
+
\sum_{s=1}^{N-1}
\beta_s^{(N)}({L^f}) \, \hat \bvarphi_s 
+
\sum_{s=0}^N
\gamma_s^{(N)}({L^f}) \, \hat\bkappa_s.
$$  
In the following, $L$ represents the non-framed knot which is historic to $L^f$.  
By using \eqref{eq:goodbasis}, we have
$$
\begin{aligned}
&\alpha_s^{(N)}(L^f) 
= 
(-1)^{N+s} \, N \, \{1\} \, \left.V_{s}(L^f)\right|_{q=\xi}, 
\qquad\qquad\qquad\quad\qquad\ 
1 \leq s \leq N-1,
\\
&\beta_s^{(N)}(L^f)
= 
[s]^2 \, \left(b_{s}^+(L^f) - b_{s}^-(L^f)\right)
=
-N\, f \, \{1\} \, \left.V_{s}(L^f)\right|_{q=\xi},  
\ 
1 \leq s \leq N-1,
\\
&\gamma_0^{(N)}(L^f)
=
\left.\dfrac{V_{2N}(L^f)}{[2N]}\right|_{q=\xi}
=
\frac{\xi^{1-\frac{f}{2}}}{4\,N} 
\left.\frac{d}{dq} \{1\}_q V_{2N}(L)\right|_{q=\xi}
\\&
\qquad\qquad\qquad\qquad\qquad\qquad\qquad\qquad\quad
=
\left.
\dfrac{N \, \xi^{-\frac{f}{2}}\,\{1\}}{2 \, \pi\,\sqrt{-1}}\,
\dfrac{d}{dm}
V_m(L)\right|_{\genfrac{}{}{0pt}{}{m=2N}{q=\xi}},
%\end{aligned}
%$$
%$$
%\begin{aligned}
\\
&\gamma_s^{(N)}(L^f) 
= 
[s]^2 \, \left(\dfrac{s}{N}\, b_{s}^+(L^f) 
+ 
\dfrac{N-s}{N}\, b_{s}^-(L^f)\right)
+
\dfrac{\{s\}_+}{[s]} \left.V_{s}(L^f)\right|_{q=\xi}
%\qquad\qquad\qquad\qquad\qquad
\\
&=
\xi^{\frac{s^2-1}{2}f} \left(
\sum_{i=0}^{\underline s-1} 
a_i(L)_\xi  \{s+i, 2i+1\} 
\sum_{k=s-i}^{s+i} \dfrac{\{k\}_+}{\{k\}} 
%\\&\qquad\qquad
+
2 \sum_{i={\underline s}}^{\overline s-1}
a_i(L)_\xi  \widetilde{\{s+i, 2i+1\}} 
\right)
\\
&=
\frac{\xi}{2N} \,\xi^{\frac{s^2-1}{2}f}
\left.\frac{d}{dq} \{1\}_q\big(V_{s}(L) + V_{2N-s}(L)\big)\right|_{q=\xi}
\\
&=
\left.
\dfrac{N \, \{1\}}{\pi\,\sqrt{-1}}\,
\xi^{\frac{s^2-1}{2}f} \, 
\dfrac{d}{dm}
V_m(L)\right|_{\genfrac{}{}{0pt}{}{m=s}{q=\xi}},
\qquad\qquad\qquad\qquad\qquad\quad
1 \leq s \leq N-1,
%\end{aligned}
%$$
%$$
%\begin{aligned}
\\
&\gamma_N^{(N)}(L^f)
=
-\left.\dfrac{V_{N}(L^f)}{[N]}\right|_{q=\xi}
=
\frac{\xi^{1+\frac{N^2-1}{2}f}}{2 \, N} 
\left.\frac{d}{dq} \{1\}_q V_{N}(L)\right|_{q=\xi}
\\&
\qquad\qquad\qquad\qquad\qquad\qquad\qquad\qquad\quad
=
\left.
\dfrac{N \, \xi^{\frac{N^2-1}{2}f}\,\{1\}}{2 \, \pi\,\sqrt{-1}}\,
\dfrac{d}{dm}
V_m(L)\right|_{\genfrac{}{}{0pt}{}{m=N}{q=\xi}}
.
%\qquad\qquad\qquad\qquad\qquad\qquad\qquad
%\qquad\qquad\qquad\qquad
\end{aligned}
$$
In  $\frac{d}{dm}\, V_m(L)$, the colored Jones invariant $V_m(L)$ is expressed by Habiro's universal formula \eqref{eq:universal} and considered to be an infinite sum with the variable $m$. 
The integer $s$ is substituted after obtaining the derivative.  
The sum reduces to a finite sum when $q$ is specialized to $\xi$.    
Hence we get the following. 
\begin{theorem}
For framed knot $L^f$ with framing $f$ and let $L$ be the same knot without framing.  
Then we have
$$
\begin{aligned}
\alpha_s^{(N)}(L^f) &= (-1)^{N+s} \, N \, \{1\} \, \left.V_{s}(L^f)\right|_{q=\xi}, 
\qquad\qquad\qquad\ 
1 \leq s \leq N-1,
\end{aligned}
$$
$$
\begin{aligned}
%\\
%\qquad\quad\qquad\ 
\beta_s^{(N)}(L^f) &= 
-N\, f \, \{1\} \, \left.V_{s}(L^f)\right|_{q=\xi},
\qquad\qquad\qquad\qquad\qquad\qquad\qquad\quad\ \ 
1 \leq s \leq N-1,
%\end{aligned}
%$$
%$$
%\begin{aligned}
\\
\gamma_0^{(N)}(L^f)
&=
\left.\dfrac{V_{2N}(L^f)}{[2N]}\right|_{q=\xi}
=
\frac{
\xi^{1-\frac{f}{2}}}{4\,N} 
\left.\frac{d}{dq} \{1\}_q V_{2N}(L)\right|_{q=\xi}
%\\&
%\qquad\qquad\qquad\quad
=
\left.
\dfrac{N \, \xi^{-\frac{f}{2}}\,\{1\}}{2 \, \pi\,\sqrt{-1}}\,
\dfrac{d}{dm}
V_m(L)\right|_{\genfrac{}{}{0pt}{}{m=2N}{q=\xi}},
%\end{aligned}
%$$
%$$
%\begin{aligned}
\\
\gamma_s^{(N)}(L^f)  
&=
\frac{\xi^{1+\frac{s^2-1}{2}f}}{2N} \,
\left.\frac{d}{dq} \{1\}\big(V_{s}(L) + V_{2N-s}(L)\big)\right|_{q=\xi}
=
\left.
\dfrac{N \, \{1\}}{\pi\,\sqrt{-1}}\,
\xi^{\frac{s^2-1}{2}f} \, 
\dfrac{d}{dm}
V_m(L)\right|_{\genfrac{}{}{0pt}{}{m=s}{q=\xi}},
\\&
\qquad\qquad\qquad\quad\qquad\qquad\qquad
\qquad\qquad\qquad\qquad\qquad\qquad\quad\ \ 
1 \leq s \leq N-1,
\\
\gamma_N^{(N)}(L^f)
&=
-\left.\dfrac{V_{N}(L^f)}{[N]}\right|_{q=\xi}\!\!\!
=
\frac{\xi^{1+\frac{N^2-1}{2}f}}{2 \, N} 
\left.\frac{d}{dq} \{1\}_q V_{N}(L)\right|_{q=\xi}\!\!\!
%\\&
%\qquad\qquad\qquad\quad
=
\left.
\dfrac{N \, \xi^{\frac{N^2-1}{2}f}\,\{1\}}{2 \, \pi\,\sqrt{-1}}\,
\dfrac{d}{dm}
V_m(L)\right|_{\genfrac{}{}{0pt}{}{m=N}{q=\xi}}
.
\end{aligned}
$$
Especially, if the framing $f=0$, 
$$
\begin{aligned}
\alpha_s^{(N)}(L) &= (-1)^{N+s} \, N \, \{1\} \, \left.V_{s}(L)\right|_{q=\xi}, 
\qquad\qquad\qquad\qquad\qquad\qquad\qquad\ 
1 \leq s \leq N-1,
%\end{aligned}
%$$
%$$
%\begin{aligned}
\\
%\qquad\quad\qquad\quad 
\beta_s^{(N)}(L) &= 0,
\qquad\qquad\qquad\qquad\qquad\qquad\qquad 
\qquad\qquad \ \ \qquad\qquad\qquad
1 \leq s \leq N-1,
\\
\gamma_0^{(N)}(L)
&=
\left.\dfrac{V_{2N}(L)}{[2N]}\right|_{q=\xi}\!\!
=
\frac{\xi}{4\,N} 
\left.\frac{d}{dq} \{1\}_q V_{2N}(L)\right|_{q=\xi}\!\!
%\\&
%\qquad\qquad\qquad\qquad\qquad\ \ 
=
\left.
\dfrac{N \,\{1\}}{2 \, \pi\,\sqrt{-1}}\,
\dfrac{d}{dm}
V_m(L)\right|_{\genfrac{}{}{0pt}{}{m=2N}{q=\xi}},
%\end{aligned}
%$$
%$$
%\begin{aligned}
\\
\gamma_s^{(N)}(L) 
&=
\frac{\xi}{2N} 
\left.\frac{d}{dq} 
\{1\}\big(V_{s}(L) + V_{2N-s}(L)\big)
\right|_{q=\xi}
%\qquad\qquad
%\\&
=
\left.
\dfrac{N \, \{1\}}{\pi\,\sqrt{-1}}\,
\dfrac{d}{dm}
V_m(L)\right|_{\genfrac{}{}{0pt}{}{m=s}{q=\xi}},  
\\&\qquad\qquad\qquad\qquad\qquad\qquad\qquad
\qquad\qquad\qquad \qquad\quad\qquad\qquad
1 \leq s \leq N-1,
\\
\gamma_N^{(N)}(L)
&=
-\left.\dfrac{V_{N}(L)}{[N]}\right|_{q=\xi}\!\!
=
\frac{\xi}{2 \, N} 
\left.\frac{d}{dq} \{1\}_q V_{N}(L)\right|_{q=\xi}\!\!
%\\&
%\qquad\qquad\qquad\quad
=
\left.
\dfrac{N \, \{1\}}{2 \, \pi\,\sqrt{-1}}
\dfrac{d}{dm}
V_m(L)\right|_{\genfrac{}{}{0pt}{}{m=N}{q=\xi}}
.
\end{aligned}
$$
\label{th:main}
\end{theorem}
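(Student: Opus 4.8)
The plan is to read the numbers $a_0,a_N,a_s,b_s^{+},b_s^{-}$ of the universal invariant $z(T_{L^f})$, as it is written in \eqref{eq:ab}, off suitable evaluations of the colored Jones invariant, and then feed them into the change-of-basis \eqref{eq:goodbasis} so as to land on the coefficients $\alpha_s^{(N)},\beta_s^{(N)},\gamma_s^{(N)}$ of the good-basis expansion \eqref{eq:gamma}. The radical coefficients $b_s^{\pm}$ are already in hand from Proposition \ref{prop:b} and Proposition \ref{prop:Habiro}, and the framing dependence from Corollary \ref{cor:cor}, so the only genuinely new input is the identification of the \emph{idempotent} coefficients $a_\bullet$.

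For this I would use the defining property of the universal invariant: $\rho_V\bigl(z(T_{L^f})\bigr)=\widetilde\tr\bigl(\rho_V^{(n)}(b_{L^f})\bigr)$ for every $\overline{\mathcal U}_\xi(sl_2)$-module $V$, where, by Proposition \ref{prop:R}, the right-hand side is the $q\to\xi$ specialization of the corresponding generic-$q$ operator. Comparing the $E,F,K$-actions on $W_m$ with those on the simple modules $U_s^{\pm}$, one checks (after rescaling the weight basis) that $W_s$, $W_N$ and $W_{2N}$ specialize at $q=\xi$ to $U_s^{+}$ $(1\le s\le N-1)$, to $U_N^{+}$, and to $U_N^{-}\oplus U_N^{-}$. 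Applying $z(T_{L^f})$ to each of these simple modules and using the action table of \cite{F} recalled after the structure proposition for $Z$ (each $\be_t$ acts by $1$ on its own two blocks and by $0$ on every other module, and each $\bw_t^{\pm}$ acts by $0$ on every simple), only the matching idempotent survives, so $a_s(L^f)=\widetilde V_s(L^f)|_{q=\xi}$, $a_N(L^f)=\widetilde V_N(L^f)|_{q=\xi}$ and $a_0(L^f)=\widetilde V_{2N}(L^f)|_{q=\xi}$. For $a_0$ one must observe that $V_{2N}(L)|_{q=\xi}=0$ — immediate from Habiro's formula \eqref{eq:universal}, since each $\{2N+i,2i+1\}_q$ has a factor that vanishes at $q=\xi$ — while $[2N]_q$ vanishes there as well, so $\widetilde V_{2N}$ has to be evaluated by l'H\^opital; with $\tfrac{d}{dq}[2N]_q|_{q=\xi}=4N/(\xi\{1\})$ this produces the $\tfrac{\xi}{4N}\tfrac{d}{dq}(\{1\}_qV_{2N})$ form, and the identity $\tfrac{d}{dq}F|_{q=\xi}=-\tfrac{2N}{\xi}\lim_{q\to\xi}F(q)/\{N\}_q$ used in Section \ref{ss:proof}, together with $\tfrac{d}{dm}q^{x}=(\ln q)q^{x}$ and $\ln\xi=\pi\sqrt{-1}/N$, turns it into the $\tfrac{d}{dm}$ form.

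Now I substitute into \eqref{eq:goodbasis}. For $\alpha_s^{(N)}=(-1)^{N+s}(q^{s}-q^{-s})N a_s$ at $q=\xi$, using $a_s=\widetilde V_s=V_s/[s]$ and $[s]=\{s\}/\{1\}$ collapses $(q^{s}-q^{-s})N a_s$ to $N\{1\}V_s$, giving the first line. For $\beta_s^{(N)}=[s]^2(b_s^{+}-b_s^{-})$: Proposition \ref{prop:Habiro} gives $b_s^{+}(L)=b_s^{-}(L)$, hence $\beta_s^{(N)}(L)=0$, and Corollary \ref{cor:cor} supplies the framed value, the difference of the two framing-correction terms carrying the coefficient $(-N+s)-s=-N$. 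For $\gamma_s^{(N)}=[s]^2\bigl(\tfrac{s}{N}b_s^{+}+\tfrac{N-s}{N}b_s^{-}\bigr)+(q^{s}+q^{-s})a_s$: since $b_s^{+}=b_s^{-}$ the first term collapses to $[s]^2b_s^{\pm}$, and inserting \eqref{eq:centervalue} together with $a_s=\widetilde V_s|_{q=\xi}$, $\{2N-s\}=-\{s\}$ and $V_m(L^f)=q^{(m^2-1)f/2}V_m(L)$ yields the first ($\tfrac{d}{dq}$) formula of \eqref{eq:formula}; finally $\gamma_N^{(N)}(L^f)=-a_N(L^f)$ is handled exactly like $\gamma_0$ except that $[N]_\xi\ne0$, so l'H\^opital is needed there only to pass to the $\tfrac{d}{dm}$ form, and $f=0$ recovers the unframed block. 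The equality of the $\tfrac{d}{dq}$ and $\tfrac{d}{dm}$ forms of $\gamma_s^{(N)}$, which also establishes the two lines of \eqref{eq:formula}, is a term-by-term comparison on \eqref{eq:universal}: differentiating $\{m+i,2i+1\}_q$ by the product rule, one uses that at $(m,q)=(s,\xi)$ the product $\{s+i,2i+1\}_\xi$ is nonzero only for $i<\underline s$ while for $\underline s\le i\le\overline s-1$ exactly one of its factors vanishes — the same dichotomy that yields the two sums in \eqref{eq:bp} and \eqref{eq:bm} — so the two expansions match, the constant $N\{1\}/(\pi\sqrt{-1})$ again coming from $\ln\xi=\pi\sqrt{-1}/N$.

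The step I expect to be the main obstacle is the identification of the $a$'s: pinning down which indecomposable of $\overline{\mathcal U}_\xi(sl_2)$ each generic-$q$ module $W_m$ degenerates to, and in particular verifying that $W_{2N}|_{q=\xi}\cong U_N^{-}\oplus U_N^{-}$ (rather than some indecomposable mixing the $\be_0$- and $\be_N$-blocks), so that the quantum trace on it really does isolate $a_0$. This is exactly where Proposition \ref{prop:R} (well-definedness of the specialized $R$-matrix action) and the projectivity arguments already used for the auxiliary modules $\mathcal Y_m^{\pm}$ do the real work; once those module identifications and the vanishing orders at $q=\xi$ recorded in Section \ref{ss:proof} and Proposition \ref{prop:Habiro} are available, the $\tfrac{d}{dq}$-versus-$\tfrac{d}{dm}$ bookkeeping and the framing corrections are routine.
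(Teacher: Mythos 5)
Your proposal follows essentially the same route as the paper: Section 4.6 of the paper likewise feeds the radical coefficients $b_s^{\pm}$ from Proposition \ref{prop:b}, Proposition \ref{prop:Habiro} and Corollary \ref{cor:cor} into the change of basis \eqref{eq:goodbasis}, identifies the idempotent coefficients $a_s$, $a_N$, $a_0$ with $\widetilde V_s$, $\widetilde V_N$, $\widetilde V_{2N}$ at $q=\xi$ (the paper leaves this identification implicit; your argument via the degeneration of $W_s$, $W_N$, $W_{2N}$ to $U_s^{+}$, $U_N^{+}$, $U_N^{-}\oplus U_N^{-}$ and the action table of the central elements is the correct way to make it explicit, and $W_{2N}|_{q=\xi}$ does split as claimed because $[N]_\xi$ kills the off-block matrix entries of $E$ and $F$), and obtains the $\tfrac{d}{dm}$ forms from the Habiro expansion exactly as in the displayed computation preceding the theorem.

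One concrete correction: your treatment of $\gamma_N^{(N)}$ rests on the claim that $[N]_\xi\neq 0$, which is false — $[N]_\xi=\{N\}_\xi/\{1\}_\xi=(\xi^{N}-\xi^{-N})/\{1\}=0$. Since every term of Habiro's formula \eqref{eq:universal} for $V_N(L)$ contains the factor $\{N\}_q$, one also has $V_N(L)|_{q=\xi}=0$, so $-V_N(L^f)/[N]|_{q=\xi}$ is a $0/0$ evaluation requiring l'H\^opital in exactly the same way as $\gamma_0^{(N)}$ (using $\tfrac{d}{dq}\{N\}_q|_{q=\xi}=-2N/\xi$, which produces the factor $\xi/(2N)$ in the stated formula); as written, your reasoning would instead give $\gamma_N^{(N)}=0$. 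This is a local slip rather than a structural flaw, and the rest of the argument — the $\beta$ cancellation $(-N+s)-s=-N$, the collapse of the $\gamma_s$ formula using $b_s^{+}=b_s^{-}$, and the $\tfrac{d}{dq}$ versus $\tfrac{d}{dm}$ bookkeeping — matches the paper's derivation.
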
 
\section{Relation to the hyperbolic volume}
In this section, we check the volume conjecture \eqref{eq:logconjecture}  of the logarithmic invariant $\gamma_s(K_{4_1})$ for the figure-eight knot $K_{4_1}$.  
\subsection{Logarithmic invariant of figure-eight knot}
The normalized colored Jones invariant $V_\lambda(K_{4_1})$ is expressed as follows.  
$$
V_s(K_{4_1})
=
\sum_{i=0}^\infty \frac{\{s+i, 2i+1\}}{\{1\}}.  
$$
This means that the coefficients $a_i(K_{4_1})$ are all equal to $1$ in Habiro's formula \eqref{eq:universal},   
and $\gamma_s^{(N)}(K_{4_1})$ is given by
\begin{equation}
\gamma_s^{(N)}(K_{4_1})
=
\sum_{i=0}^{\underline s-1} 
\{s+i, 2i+1\}
\sum_{k=s-i}^{s+i}
 \dfrac{ \{k\}_+}{\{k\}} 
%\right.
%\\&\qquad\qquad
%\left.
 + 2\sum_{i={\underline s}}^{\overline s-1}
\widetilde{\{s+i, 2i+1\}},
\label{eq:gammas}
\end{equation}
where $\underline s = \min(s, \ N-s)$, 
$\overline s = \max(s, \ N-s)$ as before.  
%\par
%
%
\subsection{Limit of the logarithmic invariant}
For  $\gamma_s^{(N)}(K_{4_1})$, the following theorem holds.  
\begin{theorem}
Let $\alpha$ be a real number with $0 \leq \alpha< {\pi}/{3}$ and let 
$s_{N}^{\alpha}= \left\lfloor{N\alpha}/{2\,\pi} \right\rfloor$ where $\lfloor x\rfloor$ is the largest integer satisfying $\lfloor x \rfloor \leq x$.  
Then
\begin{equation}
\lim_{N\to\infty}
\frac{2 \, \pi \, \left|\gamma_{s_{N}^{\alpha}}^{(N)}(K_{4_1})\right|}{N}
=
\lim_{N\to\infty}
\frac{2 \, \pi \, \left|\gamma_{N-s_{N}^{\alpha}}^{(N)}(K_{4_1})\right|}{N}
=
\rm{Vol}\left(M_\alpha\right)
\label{eq:VC}
\end{equation}
where $M_\alpha$ is the cone manifold along singular set  $K_{4_1}$ with cone angle $\alpha$.
\label{th:volume}
\end{theorem}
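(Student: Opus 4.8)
The plan is to read off the exponential growth rate of $\gamma_{s_N^\alpha}^{(N)}(K_{4_1})$ from the closed formula \eqref{eq:gammas} by a Laplace/saddle--point estimate and then to recognise the resulting critical value as $\mathrm{Vol}(M_\alpha)$. Write $s=s_N^\alpha=\lfloor N\alpha/2\pi\rfloor$, so that $s/N\to\alpha/2\pi$ and, since $\alpha<\pi/3<\pi$, one has $\underline s=s$ and $\overline s=N-s$ for all large $N$. First, the two limits in \eqref{eq:VC} coincide for a trivial reason: substituting $\{N-k\}=\{k\}$, $\{N+k\}=-\{k\}$ and $\{N-k\}_+=-\{k\}_+$ into \eqref{eq:gammas} and comparing term by term gives $\gamma_{N-s}^{(N)}(K_{4_1})=-\gamma_{s}^{(N)}(K_{4_1})$. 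Hence it suffices to prove $2\pi\log|\gamma_{s}^{(N)}(K_{4_1})|/N\to\mathrm{Vol}(M_\alpha)$, which is the convenient regime because then the first sum in \eqref{eq:gammas} involves only $O(\alpha N)$ terms.

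Let $A_N$ and $B_N$ be the first and second sum in \eqref{eq:gammas}. For $s\le i\le N-s-1$ the product $\widetilde{\{s+i,2i+1\}}$ in \eqref{eq:tilde} omits exactly one vanishing factor (the factor $\{0\}$, with $t=0$), whence
$$
\bigl|\widetilde{\{s+i,2i+1\}}\bigr|=\Bigl(\prod_{j=1}^{i-s}2\sin\tfrac{\pi j}{N}\Bigr)\Bigl(\prod_{k=1}^{s+i}2\sin\tfrac{\pi k}{N}\Bigr),
$$
and all summands of $B_N$ carry the same sign $(-1)^s$, so that $\tfrac1N\log|B_N|=\tfrac1N\log\max_{s\le i\le N-s-1}|\widetilde{\{s+i,2i+1\}}|+O(\tfrac{\log N}{N})$. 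For $A_N$ the indices run over $k\in[s-i,s+i]\subset[1,2s-1]$; using the uniform estimate $\sum_{k=1}^m\log(2\sin\tfrac{\pi k}{N})=-\tfrac N\pi\Lambda(\tfrac{\pi m}{N})+O(\log N)$, where $\Lambda(\theta)=-\int_0^\theta\log|2\sin t|\,dt$ is the Lobachevsky function, together with a routine bound on the polynomial prefactors (the factor $\sum_k\{k\}_+/\{k\}$ and the number of terms are $O(\mathrm{poly}(N))$), one gets $\limsup\tfrac1N\log|A_N|\le\sup_{0\le\beta\le\alpha/2\pi}\bigl(-\tfrac1\pi[\Lambda(\tfrac\alpha2+\pi\beta)-\Lambda(\tfrac\alpha2-\pi\beta)]\bigr)$; the $\beta$--derivative of the expression in the supremum equals $\tfrac1\pi\log\bigl(4\sin(\tfrac\alpha2+\pi\beta)\sin(\tfrac\alpha2-\pi\beta)\bigr)\le\tfrac1\pi\log\bigl(4\sin^2\tfrac\alpha2\bigr)<0$, the last inequality being precisely the hypothesis $\alpha<\pi/3$. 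So the supremum is $0$, $A_N$ is exponentially negligible, and $\tfrac1N\log|\gamma_{s}^{(N)}(K_{4_1})|=\tfrac1N\log\max_i|\widetilde{\{s+i,2i+1\}}|+o(1)$ once $|B_N|$ is seen (next paragraph) to grow exponentially. This is the only place where $\alpha<\pi/3$ is used, which is why the general case is only verified numerically.

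Putting $i=s+\lfloor tN\rfloor$ with $t\in[0,1-\alpha/\pi]$ and applying the same Riemann--sum estimate to the two products, $\tfrac1N\log|\widetilde{\{s+i,2i+1\}}|\to\Phi_\alpha(t):=-\tfrac1\pi\bigl(\Lambda(\pi t)+\Lambda(\pi t+\alpha)\bigr)$ uniformly in $t$, so $\lim 2\pi\log|\gamma_{s}^{(N)}(K_{4_1})|/N=2\pi\max_{t\in[0,1-\alpha/\pi]}\Phi_\alpha(t)$. Now $\Phi_\alpha'(t)=\log|4\sin(\pi t)\sin(\pi t+\alpha)|\to-\infty$ at both endpoints, while $\max_t 4\sin(\pi t)\sin(\pi t+\alpha)=4\cos^2\tfrac\alpha2>1$; hence $\Phi_\alpha$ decreases, then increases, then decreases, and its maximum is attained at the larger root $t^\ast$ of $4\sin(\pi t)\sin(\pi t+\alpha)=1$. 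Using $\sin\theta\sin(\theta+\alpha)=\tfrac12(\cos\alpha-\cos(2\theta+\alpha))$ one finds $\theta_\alpha:=\pi t^\ast=\pi-\tfrac12(\Theta_\alpha+\alpha)$ with $\Theta_\alpha:=\arccos(\cos\alpha-\tfrac12)$, and then, by the $\pi$--periodicity and oddness of $\Lambda$,
$$
2\pi\max_t\Phi_\alpha(t)=2\Bigl(\Lambda\bigl(\tfrac{\Theta_\alpha+\alpha}{2}\bigr)+\Lambda\bigl(\tfrac{\Theta_\alpha-\alpha}{2}\bigr)\Bigr).
$$
This value is positive, and since $t^\ast<1-\alpha/\pi$ and $\Phi_\alpha'<0$ on $(t^\ast,1-\alpha/\pi)$ it exceeds the endpoint values $\Phi_\alpha(0)=-\tfrac1\pi\Lambda(\alpha)$ and $\Phi_\alpha(1-\tfrac\alpha\pi)=\tfrac1\pi\Lambda(\alpha)$; for $\alpha=0$ it equals $4\Lambda(\pi/6)=\mathrm{Vol}(S^3\setminus K_{4_1})$.

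It remains to identify the right--hand side with $\mathrm{Vol}(M_\alpha)$. The cone manifold $M_\alpha$ is obtained from the two--ideal--tetrahedron decomposition of $S^3\setminus K_{4_1}$ by the cone deformation: for cone angle $\alpha$ it is made of two isometric hyperbolic tetrahedra whose shape parameter solves the figure--eight gluing/holonomy equation for meridian holonomy of rotation angle $\alpha$; expressed through the dihedral data this equation becomes exactly $4\sin\theta_\alpha\sin(\theta_\alpha+\alpha)=1$, and the volume is twice the tetrahedron volume, i.e.\ $2\bigl(\Lambda(\tfrac{\Theta_\alpha+\alpha}{2})+\Lambda(\tfrac{\Theta_\alpha-\alpha}{2})\bigr)$ --- equivalently one may cite the known closed form for the volume of the figure--eight cone manifold and its analysis, as in \cite{MY}, the sanity checks $\alpha=0\mapsto\mathrm{Vol}(S^3\setminus K_{4_1})$ and $\alpha\to 2\pi/3\mapsto 0$ (the Euclidean point) both holding. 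Combined with the previous paragraph, this proves \eqref{eq:VC}. I expect this last identification --- matching the combinatorial saddle equation of $\Phi_\alpha$ with the geometric gluing equation of $M_\alpha$ --- to be the main obstacle; the estimate isolating the range $\alpha<\pi/3$ (control of $A_N$ and of the ends of the Laplace interval) is the other delicate point.
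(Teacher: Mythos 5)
Your proposal is correct and follows essentially the same route as the paper: split \eqref{eq:gammas} into the two sums, show the first is negligible (this is exactly where $\alpha<\pi/3$ enters, via $4\sin^2(\alpha/2)<1$, equivalently the paper's bound $|\{s+i\}\{s-i\}|\le 1$), observe that the terms of the second sum all carry one sign so the sum is governed by its largest term, evaluate that term by the Lobachevsky--function Riemann sum with critical equation $\cos\theta=\cos\alpha-\tfrac12$, and identify the critical value with Mednykh's formula for $\mathrm{Vol}(M_\alpha)$. Your explicit symmetry argument $\gamma^{(N)}_{N-s}(K_{4_1})=-\gamma^{(N)}_{s}(K_{4_1})$ for the second equality, and your monotone analysis of $\Phi_\alpha$ (which cleanly rules out both endpoints $i=s$ and $i=N-s-1$, whereas the paper only compares against the right endpoint and with a sign slip there), are minor but genuine tidy-ups of the paper's sketchier treatment rather than a different method.
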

\begin{remark}
Numerical computation suggests that
$$
\lim_{N\to\infty}
\frac{2 \pi \log\left|\gamma_{s_{N}^{\alpha}}^{(N)}(K_{4_1})\right|}{N}
=
\begin{cases}
\rm{Vol}(M_\alpha)  
&
\text{for $0 \leq \alpha < {2\pi}/{3}$},  
\\
0  &
\text{for ${2 \pi}/{3} \leq \alpha \leq {4 \pi}/{3}$}, 
\\
\rm{Vol}(M_{2\pi-\alpha})  
&
\text{for  
${4\pi}/{3} < \alpha \leq 2\pi$}.  
\end{cases}
$$
The values of ${2 \pi \log\left|\gamma_{s}^{(N)}(K_{4_1})\right|}/{N}$ for $N=200$ and $N=400$ are shown by graphs in Figure \ref{figure:graph}.  
\end{remark}
\begin{figure}[htb]
%$$
%\begin{tabular}{rc}
%\textcolor{black}{\text{\small$\frac{2\pi\log\left|\gamma_s^{(N)}(K_{4_1})\right|}{N}$}}:
%&
%\begin{picture}(20,10)
%\thicklines\put(0,5){\line(1,0){20}}
%\end{picture}
%\\
%\textcolor{black}{\text{\small$\rm{Vol}(M_\alpha)$}} : 
%&
%\begin{picture}(20,10)
%\thinlines\put(0,5){\line(1,0){20}}
%\end{picture}
%\end{tabular}
%\qquad\qquad\qquad\qquad\qquad\qquad\qquad\qquad
%$$
$$
\begin{matrix}
\begin{matrix}
\epsfig{file=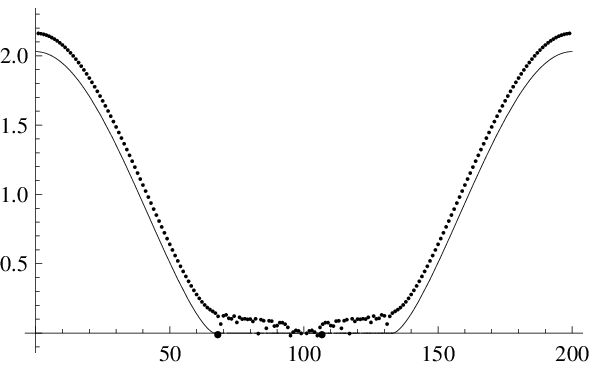, scale=1.1}
\raisebox{1mm}{\textcolor{black}{\ $s$}} &
\\[-3pt]\,\quad
\text{\small $0$}
\qquad\qquad\qquad\text{\small \ \ $\pi$}
\qquad\qquad\qquad\quad
\text{\small $2\pi$}{\, \raisebox{-1mm}{\textcolor{black}{$\alpha$}}}
\end{matrix}
&\!
\begin{matrix}
\epsfig{file=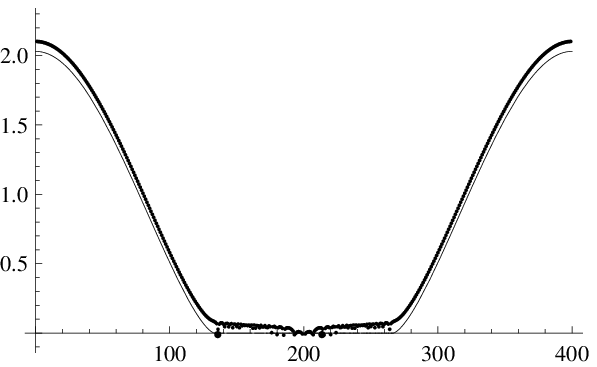, scale=1.1}
\raisebox{1mm}{\textcolor{black}{\ $s$}} &
\\[-3pt]\, \quad
\text{\small $0$}
\qquad\qquad\qquad\text{\small \ \ $\pi$}
\qquad\qquad\qquad\quad
\text{\small $2\pi$}{\, \raisebox{-1mm}{\textcolor{black}{$\alpha$}}}
\end{matrix}
\\
N=200 \qquad & N=400 \qquad
\end{matrix}
$$
\caption{Numerical computation for  
${2\pi\log\left|\gamma_s^{(N)}(K_{4_1})\right|}/{N}$ 
(dots) and the hyperbolic volume of the cone manifold $M_\alpha$ ($\alpha \leq \pi$), $M_{2\pi-\alpha}$ ($\alpha>\pi$)  along $K_{4_1}$ (thin line).}
\label{figure:graph}
\end{figure}
\par\noindent{\it Proof of Theorem \ref{th:volume}.}
We first prove 
$
\lim_{N\to\infty}
{2 \, \pi \, \left|\gamma_{s_{N}^{\alpha}}^{(N)}(K_{4_1})\right|}/{N}
=
\rm{Vol}\left(M_\alpha\right)
$
for $0 \leq \alpha < \pi/3$. 
To do this,   we start by estimating the sum 
$$
\sum_{i=0}^{s_{N}^{\alpha}-1} 
\{s_{N}^{\alpha}+i, 2i+1\}
\sum_{k=s_{N}^{\alpha}-i}^{s_{N}^{\alpha}+i} \frac{ \{k\}_+}{\{k\}}.
$$  
We know
$\left| { \{k\}_+}/{\{k\}}\right|
=
\left|\cot k \pi/N\right|
\leq 2
\left|\cot \pi/N\right|\leq N$
since $1 \leq k \leq N-1$,
and so $\left|\sum_{k=s_{N}^{\alpha}-i}^{s_{N}^{\alpha}+i} { \{k\}_+}/{\{k\}}\right| \leq N^2$.  
We also know $\{s_{N}^{\alpha}+i\}\{s_{N}^{\alpha}-i\} \leq 1$ since $0 \leq s_{N}^{\alpha} \leq {N}/{6}$ and $0 \leq i \leq s_{N}^{\alpha}$.  
Therefore, we have
$$
\left|\sum_{i=0}^{s_{N}^{\alpha}-1} 
\{s_{N}^{\alpha}+i, 2i+1\}
\sum_{k=s_{N}^{\alpha}-i}^{s_{N}^{\alpha}+i} \frac{ \{k\}_+}{\{k\}}
\right|
\leq
N^3.  
$$
Next we estimate 
$\sum_{i={s_{N}^{\alpha}}}^{N- s_{N}^{\alpha}-1}
\widetilde{\{s_{N}^{\alpha}+i, 2i+1\}}$.  
Let $a_i = (-1)^{s_{N}^{\alpha}-1} \widetilde{ \{s_{N}^{\alpha}+i, 2i+1\}}$.  
Then $a_i\geq 0$ and we have
$$
a_{i_{\max}^{(N)}}
\leq 
\sum_{i= s_{N}^{\alpha} + 1}^{N- s_{N}^{\alpha}} a_k
\leq N \, a_{i_{\max}^{(N)}}
$$
where $a_{i_{\max}^{(N)}} = \max_{s_{N}^{\alpha}  \leq i \leq N- s_{N}^{\alpha}-1}^{}(a_i)$.  
Therefore, 
\begin{equation}
- N^3 + N \, a_{i_{\max}^{(N)}} \leq
\left|\gamma_{s_{N}^{\alpha}}(K_{4_1})\right| 
\leq N^3 + N \, a_{i_{\max}^{(N)}}
\label{eq:proof}
\end{equation}
The index $i_{\max}^{(N)}$ for the maximum $a_i$ must be equal to   $i_1^{(N)} =  N-s_{N}^{\alpha}-1$ or $i_2^{(N)}$ satisfying $\{s_{N}^{\alpha}+i_2^{(N)}\}\{s_{N}^{\alpha}-i_2^{(N)}\} \geq 1$ and $\{s_{N}^{\alpha}+i_2^{(N)}+1\}\{s_{N}^{\alpha}-i_2^{(N)}-1\} \leq 1$
since $i_1^{(N)}$ and $i_2{(N)}$ correspond to the local maximal of $a_i^{}$.  
The index $i_2^{(N)}$ satisfies
\begin{equation}
\cos \frac{2\, \pi\, (i_{2}^{(N)}+1)}{N}  \leq \cos\frac{2\, \pi \, s_{N, \alpha}^{}}{N} - \frac{1}{2}
\leq \cos \frac{2\, \pi\, i_{2}^{(N)}}{N}.  
\label{eq:i0} 
\end{equation}
If $N$ is not small, such $i_2^{(N)}$ exists uniquely between ${N}/{2}+s_{N}^{\alpha}$ and $N-s_{N}^{\alpha}-1$ because of  
$\{{N}/{2}+2\, s_{N}^{\alpha}\}\{- {N}/{2}\} >1$, 
$\{2\,s_{N}^{\alpha}+1-N\}\{N-1\} < 1$
and the shape of the graph of the cosine function.  
The $\log$ of $a_i$ is given by
$$
\log a_{i}
=
\sum_{k=s_{N}^{\alpha}-i}^{-1}\log|\{k\}|
+
\sum_{k=1}^{s_{N}^{\alpha}+i}
\log|\{k\}|
$$ 
and is estimated as
\begin{multline*}
N
\int_{\frac{s_{N}^{\alpha} - i}{N}}^{0}
\log\left|2 \sin {\pi t}\right|\, dt
+
N
\int_{0}^{\frac{s_{N}^{\alpha} + i}{N}}
\log\left|2 \sin {\pi t}\right|\, dt
<
\log a_{i}
<
\\
N
\int_{\frac{s_{N}^{\alpha} - i+1}{N}}^{-\frac{1}{N}}
\log\left|2 \sin {\pi t}\right|\, dt
+
N
\int_{\frac{1}{N}}^{\frac{s_{N}^{\alpha} + i+1}{N}}
\log\left|2 \sin {\pi t}\right|\, dt.  
\end{multline*}
Therefore,
$$
%\begin{aligned}
\lim_{N\to\infty}\!\frac{2 \pi \log a_{i_1^{(N)}}}{N}
= 
-2 \,\Lambda(\alpha), 
\qquad
\lim_{N\to\infty}\!\frac{2  \pi \log a_{i_2^{(N)}}}{N}
= 
-2 \left(\Lambda( \frac{\alpha+ \theta}{2}) - \Lambda(\frac{\alpha-\theta}{2})
\right)
%\end{aligned}
$$
where $ \theta  = \lim_{N\to\infty}{2\pi i_2^{(N)}}/{N}$ and $\Lambda(x) = -\int_0^x \log \left|2\, \sin t\right|\,dt$ is the Lobachevski function.  
Then $\theta > \pi$ since 
${N}/{2}+s_{N}^{\alpha} < i_2^{(N)} < N-s_{N}^{\alpha}-1$,  and this implies that
$\Lambda\big((\alpha-\theta)/2\big) > \Lambda\big((\alpha+\theta)/2\big)$.  
We also know that $\Lambda(2\, \alpha)>0$.  
Therefor, $\lim_{N\to\infty}a_{i_1^{(N)}}<1$,  
$\lim_{N\to\infty}a_{i_2^{(N)}}>1$, and we have 
$i_{\max}^{(N)} = i_2^{(N)}$ for sufficient large $N$.  
By using \eqref{eq:proof} and the fact that $\lim_{N\to\infty}{\log N}/{N} = 0$, we get
$$%\begin{equation}
\lim_{N\to\infty}\frac{2 \, \pi \, \log\left|\gamma_{s_{N}^{\alpha}}^{(N)}(K_{4_1})\right|}{N}
= 
-2 \left(
\Lambda(\frac{\alpha+\theta}{2}) - \Lambda(\frac{\alpha-\theta}{2})\right),
%\label{eq:volume}
$$%\end{equation}
where $\theta$ satisfies
$
\cos \theta =\cos \alpha - {1}/{2}  
$
by \eqref{eq:i0}.    
The right hand side of this formula is equal to the hyperbolic volume of the cone manifold $M_\alpha$ given by Mednykh \cite{Me} since
\begin{multline*}
\frac{d}{d\alpha}\left( -2 \left(
\Lambda(\frac{\alpha+\theta}{2}) - \Lambda(\frac{\alpha-\theta}{2})\right)
\right) = 
\log \left|t - \sqrt{t^2-1}\right|
=
-\operatorname{arccosh} t
%\\
= \frac{d}{d\alpha} \rm{Vol}(M_\alpha),
\end{multline*}  
where $t = 1 + \cos \alpha - \cos 2 \alpha$, and, 
if $\alpha=0$, then $\theta =  \pi/3$ and 
$-2\big(\Lambda(\pi/6)-\Lambda(-\pi/6)\big) = 
\rm{Vol}(S^3 \setminus K_{4_1}) = \rm{Vol}(M_0)$.  
\par
The proof for another equality is similar.  
\qed
%
%
%
%
%
%

%
%\bigskip
%%%%%%%%%%%%%%%

\end{document}